\documentclass[11pt,a4paper,twoside]{article}
\usepackage{graphics}
\usepackage{amssymb}
\usepackage{amsmath}
\usepackage{mathrsfs}
\usepackage{makeidx}
\usepackage{color}
\usepackage{graphicx}
\usepackage{subfigure}
\usepackage{multicol}
\usepackage{multirow}
\usepackage{float}
\usepackage{booktabs}
\usepackage[colorlinks,
            linkcolor=red,
            anchorcolor=blue,
            citecolor=blue
            ]{hyperref}
\usepackage{caption}
\usepackage{algorithm}
\usepackage{algorithmic}
\usepackage{latexsym, cite, bm, amsthm, amsmath}
\usepackage{amsthm}
\usepackage{marginnote}
\usepackage{mathrsfs}

\def \[{\begin{equation}}
\def \]{\end{equation}}
\hoffset=0truemm    \voffset=0truemm  \topmargin=0truemm
\oddsidemargin=0truemm    \evensidemargin=0truemm
\textheight=226truemm       \textwidth=158truemm
\newtheorem{thm}{Theorem}[section]
\newtheorem{cor}{Corollary}[section]
\newtheorem{lem}{Lemma}[section]

\newtheorem{alg}{Algorithm}[section]
\newtheorem{rem}{Remark}[section]
\newtheorem{definition}{Definition}[section]
\newtheorem{exam}{Example}[section]
\numberwithin{equation}{section}

\title{An inexact framework of the Newton-based matrix splitting iterative method for the generalized absolute value equation}

\usepackage[marginal]{footmisc}
\usepackage{authblk}
\author[a,b]{Dongmei Yu\thanks{Supported partially by China Postdoctoral Science Foundation (2019M650449), the Natural Science Foundation of Liaoning Province (Nos. 2020-MS-301, 2022lslwtkt-069) and the Ministry of Education in China of Humanities and Social Science Project (No. 21YJCZH204). Email: {yudongmei1113@163.com}.}}
\author[c]{Cairong Chen\thanks{Corresponding author. Supported partially by the National Natural Science Foundation of China(No. 11901024) and the Natural Science Foundation of Fujian Province (No. 2021J01661). Email: {cairongchen@fjnu.edu.cn}.}}
\author[b]{Deren Han\thanks{Supported partially by the National Natural Science Foundation of China (Grant Nos. 12131004 and 11625105). Email: {handr@buaa.edu.cn}.}}
\affil[a]{Institute for Optimization and Decision Analytics, Liaoning Technical University, Fuxin, 123000, P.R. China.}
\affil[b]{LMIB of the Ministry of Education, School of Mathematical Sciences, Beihang University, Beijing 100191, P.R. China. }
\affil[c]{College of Mathematics and Informatics, FJKLMAA and Center for Applied Mathematics of Fujian Province, Fujian Normal University, Fuzhou 350007, P.R. China.}

\begin{document}
\date{\today}
\maketitle


\begin{quote}
{\bf Abstract:} An inexact framework of the Newton-based matrix splitting (INMS) iterative method is developed to solve the generalized absolute value equation, whose exact version was proposed by Zhou, Wu and Li [H.-Y. Zhou, S.-L. Wu and C.-X. Li, \textit{J. Comput. Appl. Math.}, 394 (2021), 113578]. Global linear convergence of the INMS iterative method is investigated in detail. Some numerical results are given to show the superiority of the INMS iterative method.

{\small
\medskip
{\em 2000 Mathematics Subject Classification}. 65F10, 65H10, 90C30

\medskip
{\em Keywords}.
Generalized absolute value equation; Newton-based matrix splitting method; Inexact; Linear convergence.
}

\end{quote}
\section{Introduction}\label{sec:intro}
In this paper, we concentrate on the solution of the generalized absolute value equation (GAVE)
\begin{equation}\label{eq:gave}
Ax-B|x|-b=0,
\end{equation}
where $A, B\in \mathbb{R}^{n\times n}$, $b\in \mathbb{R}^n$ are known and $|x|=(|x_1|, |x_2|,\cdots, |x_n|)^\top$ denotes
the component-wise absolute value of the unknown vector $x=(x_1, x_2,\cdots, x_n)^\top \in \mathbb{R}^{n}$.
The GAVE~\eqref{eq:gave} is first introduced in~\cite{rohn2004} and further investigated in \cite{mang2007,hlad2018,prok2009, wush2020} and references therein. In the special case $B=I$ or $B$ is nonsingular, the GAVE~\eqref{eq:gave} can be reduced to the absolute value equation (AVE)
\begin{equation}\label{eq:ave}
Ax-|x|-b=0.
\end{equation}
The main significance of the GAVE~\eqref{eq:gave} and the AVE~\eqref{eq:ave} arises from the fact that they have many applications in optimization fields such as linear programming problem, bimatrix games, mixed integer programming, complementarity problem, quadratic programming and others, see, e.g., \cite{mang2006,mang2007,abhm2018,manga2009,rohn1989,prok2009} and references therein. Particularly, if $B=0$, then the GAVE~\eqref{eq:gave} reduces to the linear system $Ax=b$, which plays a significant role in scientific computation.

Over the last twenty years, the GAVE~\eqref{eq:gave} and the AVE~\eqref{eq:ave} have been extensively investigated, and the research work mainly focuses on two aspects: to provide theoretical analysis and to explore efficient numerical methods. On the theoretical side, many studies have focused on the equivalent reformulations of the GAVE~\eqref{eq:gave} or the AVE~\eqref{eq:ave}, and detected the existence and nonexistence of solutions, see, e.g., \cite{mang2006,mang2007,prok2009,wu2018,wu2019,wush2020,mezzadri2020} and references therein.  Especially, it has been proved in~\cite{mang2007} that determining the existence of a solution of the general GAVE \eqref{eq:gave} or AVE \eqref{eq:ave} is NP-hard. Furthermore, in~\cite{prok2009}, it has been shown that checking whether the GAVE \eqref{eq:gave} or the AVE \eqref{eq:ave} has a unique solution or multiple solutions is NP-complete. On the numerical side, it focuses on exploring efficient numerical algorithms for solving the GAVE \eqref{eq:gave} and the AVE \eqref{eq:ave}. For example, the Newton-type methods \cite{mang2009,caqz2011,crfp2016,wangcao2019,zainali2018}, the neural network approaches \cite{maee2017,maer2018,chen2021}, the SOR-like iterations \cite{kema2017,dongshao2020,guwl2019}, the concave minimization methods \cite{mang2007a,mang2007,zamani2021,abhm2018}, the conjugate gradient method \cite{raam2019} and others, see, e.g., \cite{guhl2017,ke2020,yuch2020,sayc2018,edhs2017,iqia2015} and references therein. In the following, we go in for a closer look on some Newton-type methods.

By considering the GAVE~\eqref{eq:gave} as a system of nonlinear equations
\begin{equation}\label{nonli}
F(x)=0 \quad \text{with} \quad F(x):= Ax-B|x|-b,
\end{equation}
some Newton-type algorithms for solving nonsmooth equations are developed to find a solution of the GAVE~\eqref{eq:gave} and the AVE~\eqref{eq:ave}. Mangasarian~\cite{mang2009} utilized the generalized Jacobian $\partial|x|$ of $|x|$ based on a subgradient of its components and directly proposed the generalized Newton (GN) iterative method to solve the AVE \eqref{eq:ave}. Hu et al.~\cite{huhuzh2012} then extended the GN iteration to solve the GAVE~\eqref{eq:gave}. As the Jacobian matrix of the GN iteration is changed at each iterative step, it is undesirable for solving large-scaled problems, especially if the Jacobian matrix is ill-conditioned. To overcome this shortcoming, modified Newton-type (MN) iterative methods are developed for solving the GAVE~\eqref{eq:gave} \cite{wangcao2019}. Then a new MN (NMN) iterative method is proposed in \cite{li2021}, which is more balanced than the MN method. Subsequently, a more general Newton-based matrix splitting (NMS) method is established to solve the GAVE~\eqref{eq:gave}~\cite{wu2021n}. As shown in \cite{wu2021n}, by choosing suitable matrix splitting, the NMS method can include the Picard method \cite{rohn2014} and the MN method, and it also generates some relaxation versions. Furthermore, we find that the NMS method can comprise the NMN method and a special case of the Douglas-Rachford splitting method \cite{chenyu2021}.

However, each iteration of the NMS method requires to solve exactly a linear system, which may be intractable for large-scaled problems. This motivates us to develop an inexact framework of the NMS (INMS) iterative method for solving the GAVE~\eqref{eq:gave}, which can provide inexact versions of the afore-mentioned exact methods. Each step of the INMS method only requires to inexactly solve the involved linear system and a relative residual error tolerance is  adopted.


The remainder of this paper is organized as follows. Section~\ref{sec:imn} is devoted to the development of the INMS iterative method for solving the GAVE~\eqref{eq:gave}. In Section~\ref{sec:cov}, the global linear convergence of the INMS iterative method is explored. Section~\ref{sec:numer} reports the numerical results. Finally, some concluding remarks are given in Section~\ref{sec:conclusion}.

\textbf{Notations}. Throughout this paper, we adopt the following notations. Let $\mathbb{R}^{n\times n}$ be the set of all $n \times n$ real matrices and $\mathbb{R}^{n}= \mathbb{R}^{n\times 1}$. The identity matrix with suitable dimension is denoted by $I$. The transposition of a matrix or a vector is denoted by $\cdot ^\top$. For a vector $x=(x_1,x_2,\cdots,x_n)^\top \in \mathbb{R}^{n}$, $x_i$ refers to its $i$-th entry, $|x|$ is in $\mathbb{R}^{n}$ with its $i$-th entry $| x_i |$, and  $| \cdot |$ denotes the absolute value for real scalar. For $x \in \mathbb{R}^{n}$, $\|x\|$ denotes its $2$-norm and $\textbf{diag}(x)$ represents a diagonal matrix with $x_i$ as its diagonal entries for every $i=1,2,\cdots,n$. For $M \in \mathbb{R}^{n\times n}$, $\|M\|$ denotes the spectral norm of $M$ and is defined by $\| M \|\doteq \max \left\{ \| M x \| : x \in \mathbb{R}^{n}, \|x\|=1 \right\}$.

\section{The INMS iterative method}\label{sec:imn}
In this section, an inexact framework of the NMS iterative method for solving the GAVE~\eqref{eq:gave} is developed. To this end, we start with a short review of the exact NMS iterative method proposed in \cite{wu2021n}.

For a system of nonlinear equations with the nonlinear function which is divided into two parts, one is differentiable and another is non-differential but Lipschitz continuous, an idea to construct numerical methods is to separate the differential part and the non-differential part \cite{han2001}. Recall~\eqref{nonli}, let $A = M - N$ and
\begin{equation*}
F(x) = P(x) + Q(x)~\text{with}~P(x)=  (\Omega+M) x~\text{and}~Q(x)= -(\Omega+N) x - B|x|-b,
\end{equation*}
where $\Omega \in \mathbb{R}^{n\times n}$ is a given matrix such that the Jacobian matrix $P'(x) = \Omega + M$ is nonsingular. Based on the afore-mentioned idea, the NMS iterative method is established and the detail is given in Algorithm~\ref{alg:nms}.

\begin{alg}[\!\!\cite{wu2021n}]\label{alg:nms}{\bf $($The NMS iterative method$)$} Let $A=M-N$ be a splitting of the matrix $A \in \mathbb{R}^{n\times n}$, $B \in \mathbb{R}^{n\times n}$ and $b\in \mathbb{R}^{n}$. Given a matrix $\Omega \in \mathbb{R}^{n \times n}$ such that $\Omega+ M$ is invertible. Assume that $x^0\in \mathbb{R}^n$ is an arbitrary initial guess.  For $k = 0,\,1,\,2,\,\cdots$ until the iterative sequence $\{x^k\}$ is convergent, compute	
\begin{equation}\label{eq:innms-iter}
	x^{k+1}={(\Omega+M)}^{-1}[(\Omega +N) x^k +B|x^k|+b].
	\end{equation}
\end{alg}

The iterative sequence $\{x^k\}$ generated by Algorithm~\ref{alg:nms} has the following general convergence property.

\begin{thm}[\!\!\cite{wu2021n}]\label{thm:iterpro} Let $A, B \in \mathbb{R}^{n\times n}$ and $\Omega \in \mathbb{R}^{n\times n}$ is a given matrix. Assume that $A = M-N$ and $\Omega+M$ is nonsingular.
If
\begin{equation}\label{eq:exact}
\|(M+\Omega)^{-1}\|(\|N+\Omega\|+\|B\|)<1,
\end{equation}
then the sequence $\{x^k\}$ generated by Algorithm~\ref{alg:nms} converges linearly from any starting point to a solution $x^*$ of the GAVE~\eqref{eq:gave}.
\end{thm}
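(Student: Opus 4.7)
The plan is to recast Algorithm~\ref{alg:nms} as a fixed-point iteration $x^{k+1}=T(x^k)$ with
\[
T(x):=(M+\Omega)^{-1}\bigl[(N+\Omega)x+B|x|+b\bigr],
\]
and then show that the hypothesis~\eqref{eq:exact} makes $T$ a contraction on $\mathbb{R}^n$. The Banach fixed-point theorem will then deliver both the existence of $x^\ast$ and the linear convergence of $\{x^k\}$ to it, in one stroke.

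First I would verify that any solution $x^\ast$ of the GAVE~\eqref{eq:gave} is a fixed point of $T$: starting from $(M-N)x^\ast=B|x^\ast|+b$, add $(\Omega+N)x^\ast$ to both sides to obtain $(M+\Omega)x^\ast=(N+\Omega)x^\ast+B|x^\ast|+b$, then multiply by $(M+\Omega)^{-1}$ (which exists by assumption). Conversely, any fixed point of $T$ is trivially a solution. Thus solving~\eqref{eq:gave} is equivalent to finding a fixed point of $T$.

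Next I would estimate $\|T(x)-T(y)\|$ for arbitrary $x,y\in\mathbb{R}^n$. Subtracting the defining expressions and taking norms,
\[
\|T(x)-T(y)\|\le\|(M+\Omega)^{-1}\|\bigl(\|(N+\Omega)(x-y)\|+\|B(|x|-|y|)\|\bigr).
\]
For the second term I would invoke the elementary componentwise inequality $\bigl||x_i|-|y_i|\bigr|\le|x_i-y_i|$, which implies $\||x|-|y|\|\le\|x-y\|$ in the $2$-norm. Using submultiplicativity of the spectral norm I then obtain
\[
\|T(x)-T(y)\|\le\|(M+\Omega)^{-1}\|\bigl(\|N+\Omega\|+\|B\|\bigr)\|x-y\|=:\rho\,\|x-y\|,
\]
with $\rho<1$ by hypothesis~\eqref{eq:exact}.

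Finally, since $T$ is a strict contraction on the complete metric space $(\mathbb{R}^n,\|\cdot\|)$, the Banach fixed-point theorem guarantees a unique fixed point $x^\ast$ of $T$ and the geometric estimate $\|x^{k+1}-x^\ast\|\le\rho\,\|x^k-x^\ast\|$, hence $\|x^k-x^\ast\|\le\rho^k\|x^0-x^\ast\|$ from every starting point $x^0$. This is exactly the claimed global linear convergence. I do not foresee a serious obstacle: the only non-algebraic ingredient is the Lipschitz property of $|\cdot|$, and even that is immediate; the rest is bookkeeping with operator norms. The one point that deserves a brief sentence in the write-up is that the contraction argument simultaneously proves existence of $x^\ast$, so no separate solvability hypothesis on the GAVE is required.
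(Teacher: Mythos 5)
Your proof is correct. It is worth noting how it relates to the route the paper itself takes: Theorem~\ref{thm:iterpro} is quoted from \cite{wu2021n}, and the present paper re-derives it as the $\theta=0$ case of Theorem~\ref{thm:newimn}, whose proof rests on Lemma~\ref{imncon}. That lemma presupposes a solution $x^*$ with $F(x^*)=0$ (the paper assumes throughout Section~\ref{sec:cov} that the solution set of the GAVE is nonempty) and bounds $\|N_\theta(x)-x^*\|$ against $\|x-x^*\|$. The algebra in your contraction estimate is essentially the same as the algebra in that lemma --- the same splitting of the update, submultiplicativity of the spectral norm, and the Lipschitz bound $\||x|-|y|\|\le\|x-y\|$ --- but your framing as a Banach fixed-point argument on all of $\mathbb{R}^n$ buys more: under~\eqref{eq:exact} it proves existence and uniqueness of the solution rather than assuming solvability, which is why you correctly observe that no separate nonemptiness hypothesis is needed. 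The trade-off is that your argument is specific to the exact iteration: for the inexact method of Algorithm~\ref{alg:inms} the update $x^{k+1}$ is only required to satisfy the residual inequality~\eqref{eq:inms-iter}, so the iteration is a family of admissible maps rather than a single-valued self-map, and the Banach fixed-point theorem cannot be invoked directly; that is precisely why the paper anchors its convergence analysis at an assumed solution $x^*$ instead. For the exact theorem as stated, your proof is complete and, if anything, slightly stronger than what is claimed.
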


From Algorithm~\ref{alg:nms}, at each iterative step, the NMS iterative method requires the exact solution of the linear system with coefficient matrix $\Omega +M$, which might be computationally expensive or impractical in actual implementations. To alleviate the burden of each step and to
further improve the computational efficiency, a new algorithm adopting approximate solution of the above linear system is much desirable. To this end, in Algorithm~\ref{alg:inms}, we develop the INMS iterative method adapted to solving the GAVE~\eqref{eq:gave}.

\begin{alg}\label{alg:inms}{\bf $($The INMS iterative method$)$} Let $A=M-N$ be a splitting of $A\in\mathbb{R}^{n\times n}$, $B \in \mathbb{R}^{n\times n}$ and $b\in \mathbb{R}^{n}$. Given an initial guess $x^0\in \mathbb{R}^n$, a residual relative error tolerance $\theta \in [0,1)$ and a matrix $\Omega \in \mathbb{R}^{n \times n}$. Assume that $\Omega+ M$ is invertible, for $k = 0,\,1,\,2,\,\cdots$ until the iterative sequence $\{x^k\}$ is convergent, compute $x^{k+1}$ such that
	\begin{equation}\label{eq:inms-iter}
	\|(\Omega+M)x^{k+1}-[(\Omega+N) x^k +B|x^k|+b]\| \leq \theta \| F(x^k)\| .
	\end{equation}
\end{alg}

It is noteworthy that in the case $\theta = 0$, the INMS iterative method will retrieve the NMS iterative method. Thus, Algorithm~\ref{alg:inms} also encompasses the inexact versions of the methods mentioned in the following Remarks~\ref{rem:spec}--\ref{rem:spec3} (which appeared in \cite{wu2021n}, except Remarks~\ref{rem:spec1}--\ref{rem:spec3}).

\begin{rem}\label{rem:spec}
Let $A = D - L - U$ with $D = diag(A)$, $L$ and $U$ represent the strictly lower-triangular and upper-triangular part of $-A$, respectively. It has been mentioned in \cite{wu2021n} that Algorithm~\ref{alg:nms} will reduce to the following methods.
\begin{itemize}
  \item[(a)] If $M = A$ and $\Omega= N = 0$, then Algorithm~\ref{alg:nms} becomes the Picard method~\cite{rohn2014}:
  \begin{equation}\label{eq:picard}
	x^{k+1}= A^{-1}(B|x^k|+b).
  \end{equation}

  \item[(b)] If $M = A$ and $N = 0$, then Algorithm~\ref{alg:nms} turns into the MN method~\cite{wangcao2019}:
  \begin{equation}\label{eq:mn}
  x^{k+1} = (\Omega +A)^{-1}(\Omega x^k + B|x^k|+b).
 \end{equation}

  \item[(c)] If $M = D$ and $N = L+U$, then Algorithm~\ref{alg:nms} reduces to the Newton-based Jacobi (NJ) method:
  \begin{equation}\label{eq:nj}
  x^{k+1} = (\Omega +D)^{-1}[(\Omega+L+U) x^k + B|x^k|+b)].
  \end{equation}

  \item[(d)] If $M = D-L$ and $N = U$, then Algorithm~\ref{alg:nms} becomes the Newton-based Gauss-Seidel (NGS) method:
  \begin{equation}\label{eq:ngs}
  x^{k+1} = (\Omega +D-L)^{-1}[(\Omega+U) x^k + B|x^k|+b)].
  \end{equation}

  \item[(e)] If $M = \frac{1}{\alpha}D - L$ and $N = (\frac{1}{\alpha} - 1)D + U$, then Algorithm~\ref{alg:nms} reduces to the Newton-based SOR (NSOR) method:
      $$
      x^{k+1} = (D + \alpha\Omega -\alpha L)^{-1}[(\alpha\Omega+\alpha U+ (1-\alpha)D) x^k +\alpha( B|x^k|+b)].
      $$

      \item[(f)] If $M = \frac{1}{\alpha}(D - \beta L)$ and $N = \frac{1}{\alpha}\left[(1-\alpha)D + (\alpha - \beta)L + \alpha U\right]$, then Algorithm~\ref{alg:nms} turns into the Newton-based AOR (NAOR) method:
      $$
      x^{k+1} = (D + \alpha\Omega -\beta L)^{-1}[(\alpha\Omega+\alpha U+ (1-\alpha)D + (\alpha-\beta)L) x^k +\alpha( B|x^k|+b)].
      $$

      \item[(g)] Let $H = \frac{1}{2}(A + A^\top)$ and $S = \frac{1}{2}(A - A^\top)$. If $M = H$ and $N = -S$, then Algorithm~\ref{alg:nms} reduces to the Newton-based Hermitian and skew-Hermitian  method:
      $$
      x^{k+1} = (\Omega +H)^{-1}[(\Omega-S) x^k +B|x^k|+b].
      $$
\end{itemize}
\end{rem}
\begin{rem}\label{rem:spec1}
If $M = \frac{1}{2}(A-\Omega)$ and $N=-\frac{1}{2}(A+\Omega)$, then Algorithm~\ref{alg:nms} turns into the NMN method~\cite{li2021}:
\begin{equation}\label{eq:nmn}
  x^{k+1}={(\Omega+A)}^{-1}[(\Omega-A) x^k +2(B|x^k|+b)].
\end{equation}
\end{rem}
\begin{rem}\label{rem:spec3}
Let $M =A$, $N=0$ and $\Omega=(\frac{2}{\gamma}-1)A$ with $\gamma \in (0,2)$. If $A\in \mathbb{R}^{n\times n}$ is nonsingular and $B=I$, then Algorithm~\ref{alg:nms} becomes the Douglas-Rachford splitting method for solving the AVE~\eqref{eq:ave}~\cite{chenyu2021}:
\begin{equation}\label{eq:drs}
  x^{k+1}=(1-\frac{1}{2}\gamma)x^k+\frac{1}{2}\gamma A^{-1}(|x^k|+b).
\end{equation}
\end{rem}

In the next section, we will devote ourselves to the study of the convergence of the INMS method.
\section{Convergence analysis}\label{sec:cov}
In this section, we will analyze the general convergence of the INMS iterative method in the context of solving the GAVE~\eqref{eq:gave}. Throughout this paper, we assume that the solution set of the GAVE~\eqref{eq:gave} is nonempty.

Before establishing the convergence of the sequence $\{x^k\}$ generated by Algorithm~\ref{alg:inms}, a family of mappings are defined according to \eqref{eq:inms-iter} and  their properties are studied.

\begin{definition}\label{eq:def}
For $\theta \in [0,1)$, $\mathcal{N}_\theta$ is the family of mappings $N_\theta:$  $\mathbb{R}^{n} \rightarrow \mathbb{R}^{n}$ such that
	\begin{equation*}
	\|(\Omega+M)N_\theta(x)-[(\Omega+N) x +B|x|+b]\| \leq \theta \| F(x)\| ,\quad \forall x \in \mathbb{R}^{n}.
	\end{equation*}
\end{definition}

If $\Omega +M$ is invertible, then the family $\mathcal{N}_0$ only has a single element, that is, the exact NMS iterative map $N_0: \mathbb{R}^{n} \rightarrow \mathbb{R}^{n}$ defined by
\begin{equation*}
	N_0(x)=(\Omega+M)^{-1}[(\Omega+N) x +B|x|+b].
\end{equation*}
In light of Definition~\ref{eq:def}, $\mathcal{N}_0 \subseteq  \mathcal{N}_\theta \subseteq  \mathcal{N}_{\theta'}$ for $0 \leq \theta \leq  \theta' <1$. Hence $\mathcal{N}_\theta$ is nonempty for all $\theta \in [0,1)$. More specifically, for $x\in \mathbb{R}^n$, any $\theta\in [0,1)$ and $N_\theta \in \mathcal{N}_\theta$, we have $N_\theta(x) = x$ if and only if $F(x) = 0$.

According to \eqref{eq:inms-iter} and Definition~\ref{eq:def}, the outcome of the INMS iterative method is
\begin{equation}\label{eq:14}
x^{k+1}=N_\theta(x^k),\quad k=0,1,2,\cdots
\end{equation}
with some $N_\theta \in \mathcal{N}_\theta$ and $\theta \in [0,1)$. Thus, the following lemma lays the foundation of our convergence analysis hereinafter.
\begin{lem}\label{imncon}
 Assume that $\Omega+M$ is invertible. Let $\theta \in [0,1)$ and $N_\theta \in \mathcal{N}_\theta$ defined as in Definition~\ref{eq:def}. If $F(x^*)=0$, then for every $x \in \mathbb{R}^{n}$  we obtain
\begin{equation}\label{eq:nx}
\|N_\theta(x)-x^*\| \leq \|(\Omega+ M)^{-1}\|  \left[\theta (\|\Omega+M\|+\|\Omega+N\|+\|B\|) + \|B\|+\|\Omega+N \| \right] \|x-x^*\|.
\end{equation}
\end{lem}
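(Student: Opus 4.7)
The plan is to exploit that $x^\ast$ is a fixed point of the exact iteration $N_0$, then control the deviation of $N_\theta(x)$ from $N_0(x)$ by the residual tolerance and control the deviation of $N_0(x)$ from $x^\ast$ by a direct triangle-inequality estimate.

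First I would use $F(x^\ast)=0$ to rewrite the solution condition in the form convenient for the algorithm, namely $(\Omega+M)x^\ast = (\Omega+N)x^\ast + B|x^\ast| + b$. This is the identity $N_0(x^\ast)=x^\ast$ in disguise, and it lets me insert $x^\ast$ into the bracket appearing in Definition~\ref{eq:def}. Specifically, I would write
\[
(\Omega+M)\bigl(N_\theta(x)-x^\ast\bigr) = \bigl\{(\Omega+M)N_\theta(x)-[(\Omega+N)x+B|x|+b]\bigr\} + (\Omega+N)(x-x^\ast) + B\bigl(|x|-|x^\ast|\bigr),
\]
applying the rewritten fixed-point identity to the last two terms. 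Multiplying by $(\Omega+M)^{-1}$ and taking norms, the first brace is bounded by $\theta\|F(x)\|$ thanks to Definition~\ref{eq:def}, while $\||x|-|x^\ast|\|\le\|x-x^\ast\|$ handles the absolute-value term. This yields the intermediate estimate
\[
\|N_\theta(x)-x^\ast\| \le \|(\Omega+M)^{-1}\|\,\Bigl[\theta\|F(x)\| + \bigl(\|\Omega+N\|+\|B\|\bigr)\|x-x^\ast\|\Bigr].
\]

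Next I would bound $\|F(x)\|$ in terms of $\|x-x^\ast\|$. Since $A=M-N=(\Omega+M)-(\Omega+N)$, I can rewrite $F(x)=(\Omega+M)x-(\Omega+N)x-B|x|-b$, and subtracting the same expression evaluated at $x^\ast$ (which vanishes) gives
\[
F(x) = (\Omega+M)(x-x^\ast) - (\Omega+N)(x-x^\ast) - B\bigl(|x|-|x^\ast|\bigr).
\]
Triangle inequality plus $\||x|-|x^\ast|\|\le\|x-x^\ast\|$ then gives $\|F(x)\|\le(\|\Omega+M\|+\|\Omega+N\|+\|B\|)\|x-x^\ast\|$. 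Substituting into the intermediate estimate yields exactly \eqref{eq:nx}.

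There is no real obstacle here; the proof is essentially a clean bookkeeping exercise, the only mild subtlety being the choice to express $A$ as $(\Omega+M)-(\Omega+N)$ so that the bound on $\|F(x)\|$ features precisely the norms $\|\Omega+M\|$, $\|\Omega+N\|$, $\|B\|$ that appear on the right-hand side of \eqref{eq:nx}. Using the alternative decomposition $F(x)=(M-N)(x-x^\ast)-B(|x|-|x^\ast|)$ would produce a weaker-looking bound involving $\|M-N\|=\|A\|$ instead of the separate pieces, so the slightly artificial insertion of $\Omega$ on both sides is what keeps the final constants sharp and matches the target expression.
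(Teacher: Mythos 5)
Your proposal is correct and follows essentially the same route as the paper's proof: both insert $x^\ast$ via the identity $(\Omega+M)x^\ast=(\Omega+N)x^\ast+B|x^\ast|+b$, bound the residual term by $\theta\|F(x)\|$ using Definition~\ref{eq:def}, and establish $\|F(x)\|\le(\|\Omega+M\|+\|\Omega+N\|+\|B\|)\|x-x^\ast\|$ from the decomposition $A=(\Omega+M)-(\Omega+N)$ together with $\||x|-|x^\ast|\|\le\|x-x^\ast\|$. The paper merely packages the last two terms of your key identity as $-[F(x^\ast)-F(x)-(\Omega+M)(x^\ast-x)]$, which is the same algebra.
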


\begin{proof}
Let $x \in \mathbb{R}^{n}$. Taking into account that $F(x^*)=0$, it is easy to see that
\begin{align*}
N_\theta(x)-x^* &=(\Omega +M)^{-1}\{(\Omega+M)N_\theta(x)-(\Omega+N) x-B|x|-b \\
&\quad+[F(x^*)-F(x)-(\Omega+M)(x^*-x)]\},
\end{align*}
from which we immediately have
\begin{align}\nonumber
\|N_\theta(x)-x^*\| & \leq \|(\Omega+M)^{-1}\| (\|(\Omega+M)N_\theta(x)-(\Omega+N) x-B|x|-b\| \\ \label{eq:ineql}
&\quad+\|F(x^*)-F(x)-(\Omega+M)(x^*-x)\|).
\end{align}
The combination of Definition~\ref{eq:def} and the inequality \eqref{eq:ineql} leads to
\begin{equation}\label{eqrif}
\|N_\theta(x)-x^*\|  \leq \|(\Omega+M)^{-1}\| (\theta \|F(x)\|  +\|F(x^*)-F(x)-(\Omega+M)(x^*-x)\|).
\end{equation}
On the other hand, it follows from $F(x^*)=0$ that
$$ F(x)=(\Omega+M)(x-x^*)-[F(x^*)-F(x)-(\Omega+M)(x^*-x)], $$
from which we get
\begin{equation}\label{eq:10}
\|F(x)\| \leq \|\Omega+M\| \|x-x^*\|+ \|F(x^*)-F(x)-(\Omega+M)(x^*-x)\|.
\end{equation}
Furthermore, some manipulation yields
\begin{align}\nonumber
F(x^*)&-F(x)-(\Omega+M)(x^*-x) \\\nonumber
&=(\Omega+M)x^*-(\Omega+N)x^*-B|x^*|-b-[(\Omega+M)x-(\Omega+N)x\\\nonumber
&-B|x|-b]-(\Omega+M)(x^*-x)\\\nonumber
&=(\Omega+N)(x -x^*)+B(|x|-|x^*|),
\end{align}
from which we can conclude that
\begin{equation}\label{eq:11}
\|F(x^*)-F(x)-(\Omega+M)(x^*-x)\| \leq (\|\Omega+N\|+\|B\|) \|x-x^*\|,
\end{equation}
where $\||x|-|x^*|\|\le \|x-x^*\|$ is utilized.

Inequalities \eqref{eq:10} and \eqref{eq:11} collectively imply that
\begin{equation}\label{eq:12}
\|F(x)\| \leq (\|\Omega+M\|+\|\Omega+N\|+\|B\|) \|x-x^*\|.
\end{equation}
Substituting the inequalities \eqref{eq:11} and \eqref{eq:12} into inequality \eqref{eqrif}, we conclude that
\begin{align*}
\|N_\theta(x)-x^*\| & \leq \|(\Omega+M)^{-1}\| [\theta (\|\Omega+M\|+\|\Omega+N\|+\|B\|)\|x-x^*\|+(\|\Omega+N\|+\|B\|)\|x-x^*\|] \\
&=\|(\Omega+M)^{-1}\|\left[\theta (\|\Omega+M\|+\|\Omega+N\|+\|B\|)+\|\Omega+N\|+\|B\|\right]\|x-x^*\|,
\end{align*}
which completes the proof.
\end{proof}

Now, we are in position to prove the main results of this section.

\begin{thm}\label{thm:newimn}
Let $A=M-N$ be a splitting of $A\in \mathbb{R}^{n\times n}$, $B\in \mathbb{R}^{n\times n}$, $b\in \mathbb{R}^{n}$, $\theta \in [0,1)$. Assume that $\Omega \in \mathbb{R}^{n\times n}$ is a given matrix such that $\Omega+M$ is invertible. Then, the sequence $\{x^k\}$ generated by Algorithm~\ref{alg:inms} with any starting point $x^0\in \mathbb{R}^{n}$ is well defined and it has
\begin{equation}\label{eq:newx}
\|x^{k+1}-x^*\| \leq \|(\Omega+M)^{-1}\|  \left[\theta (\|\Omega+M\|+\|\Omega+N\|+\|B\|)+\|\Omega+N\|+\|B\|\right]\|x^k-x^*\|
\end{equation}
for $k=1,2,\cdots$.
Furthermore, if
\begin{equation}\label{eq:15}
\|(\Omega+M)^{-1}\| < \frac{1}{\theta (\|\Omega+M\|+\|\Omega+N\|+\|B\|)+\|\Omega+N\|+\|B\|},
\end{equation}
then the sequence $\{x^k\}$ converges linearly to $x^* \in \mathbb{R}^{n}$, a solution of the GAVE \eqref{eq:gave}.
\end{thm}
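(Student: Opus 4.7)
The plan is to reduce the theorem to a direct application of Lemma~\ref{imncon}, since that lemma already does all the heavy estimation work in a single step of the iteration. What remains is an induction on $k$ together with the bookkeeping to address well-definedness and to promote the per-step contraction estimate into linear convergence.

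First I would address the well-definedness of $\{x^k\}$. Since $\mathcal{N}_0 \subseteq \mathcal{N}_\theta$ (as observed right after Definition~\ref{eq:def}) and $\Omega+M$ is invertible by hypothesis, the exact map $N_0(x)=(\Omega+M)^{-1}[(\Omega+N)x+B|x|+b]$ lies in $\mathcal{N}_\theta$, so at every step we can at worst choose $x^{k+1}=N_0(x^k)$ to satisfy the inexactness criterion \eqref{eq:inms-iter}. Hence the sequence is well defined regardless of the starting point. At the same time, \eqref{eq:inms-iter} means precisely that $x^{k+1}=N_\theta(x^k)$ for some (possibly $k$-dependent) $N_\theta\in\mathcal{N}_\theta$, which is the form required to invoke Lemma~\ref{imncon}.

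Next I would fix a solution $x^*$ of the GAVE~\eqref{eq:gave} (nonempty by the standing assumption in Section~\ref{sec:cov}) and apply Lemma~\ref{imncon} with $x=x^k$. This directly yields
\begin{equation*}
\|x^{k+1}-x^*\| \le \|(\Omega+M)^{-1}\|\bigl[\theta(\|\Omega+M\|+\|\Omega+N\|+\|B\|)+\|\Omega+N\|+\|B\|\bigr]\|x^k-x^*\|,
\end{equation*}
which is exactly \eqref{eq:newx}. No further computation is needed for this part; the estimate holds for every $k\ge 0$, so by a trivial induction
\begin{equation*}
\|x^{k}-x^*\| \le L^{k}\,\|x^0-x^*\|,\qquad k=0,1,2,\dots,
\end{equation*}
where $L:=\|(\Omega+M)^{-1}\|\bigl[\theta(\|\Omega+M\|+\|\Omega+N\|+\|B\|)+\|\Omega+N\|+\|B\|\bigr]$ is the contraction factor in \eqref{eq:newx}.

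Finally, I would observe that condition \eqref{eq:15} is equivalent to $L<1$. Combined with the iterated bound above, this gives $\|x^k-x^*\|\to 0$ geometrically, i.e. linear convergence of $\{x^k\}$ to $x^*$, completing the proof. I do not expect any genuine obstacle here: the only subtle point is making explicit that the inexact rule \eqref{eq:inms-iter} does not uniquely determine $x^{k+1}$, so the argument should treat $N_\theta$ as possibly varying with $k$; but since Lemma~\ref{imncon} provides a uniform bound over all $N_\theta\in\mathcal{N}_\theta$, this causes no difficulty.
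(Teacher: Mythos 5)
Your proposal is correct and follows essentially the same route as the paper: well-definedness from the invertibility of $\Omega+M$ (via $\mathcal{N}_0\subseteq\mathcal{N}_\theta$), then a direct application of Lemma~\ref{imncon} with $x=x^k$ to obtain \eqref{eq:newx}, and finally the observation that \eqref{eq:15} makes the contraction factor less than one. Your explicit remark that the map $N_\theta$ may vary with $k$ but that the lemma's bound is uniform over $\mathcal{N}_\theta$ is a useful clarification the paper leaves implicit.
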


\begin{proof}
For any starting point $x^0\in \mathbb{R}^{n}$, by Definition~\ref{eq:def} and \eqref{eq:inms-iter}, the well-definedness of $\{x^k\}$ follows from invertibility of $\Omega+M$. Since $x^*$ is the solution of \eqref{eq:gave}, together with $F(x^*)=0$, we then conclude that for $k=1,2,\cdots$, the sequence $\{x^k\}$ satisfies \eqref{eq:newx} according to \eqref{eq:14} and \eqref{eq:nx}. On the other hand, taking~\eqref{eq:15} into account, we immediately obtain,
$$ \|(\Omega+M)^{-1}\| [\theta (\|\Omega+M\|+\|\Omega+N\|+\|B\|)+\|\Omega+N\|+\|B\|]<1,$$
which combining with \eqref{eq:newx} means that the sequence $\{x^k\}$ converges linearly to $x^*$.
\end{proof}

\begin{rem}
If $\theta = 0$, then \eqref{eq:15} reduces to \eqref{eq:exact}, a sufficient convergence condition for the NMS iterative method proposed in  \cite{wu2021n}. However, our proof here seems differ from that of Theorem~3.1 in \cite{wu2021n}.
\end{rem}

In the light of the Banach perturbation~\cite {gova2009}, Theorem~\ref{thm:newimna} can be given as well.
\begin{thm}\label{thm:newimna}
Let $A \in \mathbb{R}^{n\times n}$ be nonsingular and $A=M-N$, where $M$ is invertible. Assume that~ $\Omega \in \mathbb{R}^{n\times n}$ is a given matrix such that $\Omega + M$ is invertible. If
	\begin{equation}\label{eq:ncont}
	\|M^{-1}\| < \frac{1}{\theta (\|\Omega+M\|+\|\Omega+N\|+\|B\|)+\|\Omega+N\|+\|B\|+\|\Omega\|},
	\end{equation}
	then the INMS iterative method converges linearly from any starting point to a solution $x^*$ of the GAVE~\eqref{eq:gave}.
\end{thm}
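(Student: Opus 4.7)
The plan is to reduce Theorem~\ref{thm:newimna} to Theorem~\ref{thm:newimn} by controlling the quantity $\|(\Omega+M)^{-1}\|$ that appears in the sufficient condition \eqref{eq:15} through a Banach perturbation argument applied to the splitting $\Omega+M = M(I+M^{-1}\Omega)$. The whole point is that the hypothesis \eqref{eq:ncont} is phrased in terms of $\|M^{-1}\|$ and $\|\Omega\|$, which are often easier to estimate in practice than $\|(\Omega+M)^{-1}\|$ directly.

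First, I would abbreviate $C := \theta(\|\Omega+M\|+\|\Omega+N\|+\|B\|)+\|\Omega+N\|+\|B\|$ to keep the formulas short; then \eqref{eq:ncont} reads $\|M^{-1}\|(C+\|\Omega\|)<1$, while \eqref{eq:15} reads $\|(\Omega+M)^{-1}\|\,C<1$. From \eqref{eq:ncont} I can immediately read off $\|M^{-1}\|\,\|\Omega\|<1$ (since $C\geq 0$), which is exactly what is needed to invoke the Banach perturbation lemma (see, e.g.,~\cite{gova2009}) on $I+M^{-1}\Omega$. That lemma yields both the invertibility of $\Omega+M$ (actually already assumed, but now automatic) and the norm bound
\begin{equation*}
\|(\Omega+M)^{-1}\| \leq \frac{\|M^{-1}\|}{1-\|M^{-1}\|\,\|\Omega\|}.
\end{equation*}

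Next, it is a matter of routine algebra to check that \eqref{eq:ncont} implies \eqref{eq:15}: rearranging $\|M^{-1}\|(C+\|\Omega\|)<1$ gives $\|M^{-1}\|\,C < 1-\|M^{-1}\|\,\|\Omega\|$, and dividing by the positive quantity $1-\|M^{-1}\|\,\|\Omega\|$ together with the displayed bound above yields
\begin{equation*}
\|(\Omega+M)^{-1}\|\,C \;\leq\; \frac{\|M^{-1}\|\,C}{1-\|M^{-1}\|\,\|\Omega\|} \;<\; 1,
\end{equation*}
which is precisely condition \eqref{eq:15}. An appeal to Theorem~\ref{thm:newimn} then delivers the desired linear convergence of $\{x^k\}$ to a solution $x^*$ of \eqref{eq:gave}.

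There is no real obstacle here; the argument is entirely a perturbation/continuity computation in norm, with the only mildly delicate point being the strict inequality $\|M^{-1}\|\,\|\Omega\|<1$ that legitimizes the application of the Banach lemma. I would take care to remark that this strict inequality is a free consequence of \eqref{eq:ncont} (through $C\geq 0$), so no additional smallness assumption on $\Omega$ is being silently imposed, and that the present bound is a strict weakening of the exact-convergence criterion \eqref{eq:exact} when $\theta=0$, recovering a Banach-type version of the NMS convergence result of~\cite{wu2021n}.
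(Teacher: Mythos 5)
Your proposal is correct and follows essentially the same route as the paper: apply the Banach perturbation lemma to bound $\|(\Omega+M)^{-1}\|$ by $\|M^{-1}\|/(1-\|M^{-1}\|\,\|\Omega\|)$ and then verify that \eqref{eq:ncont} forces condition \eqref{eq:15} of Theorem~\ref{thm:newimn}. Your explicit observation that $\|M^{-1}\|\,\|\Omega\|<1$ is a free consequence of \eqref{eq:ncont} is a small point the paper leaves implicit, but the argument is otherwise identical.
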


\begin{proof}
Following from the proof of Theorem~\ref{thm:newimn}, we can conclude from~\eqref{eq:newx} that if
$$\|(\Omega+M)^{-1}\|  \left[\theta (\|\Omega+M\|+\|\Omega+N\|+\|B\|)+\|\Omega+N\|+\|B\|\right]<1, $$
then the INMS iterative method is linearly convergent. According to \eqref{eq:ncont} and the Banach perturbation~\cite[Lemma~2.3.3]{gova2009}, it follows that
\begin{align}
\|(M+\Omega)^{-1}\| &\leq \frac{\|M^{-1}\|}{1-\|M^{-1}\| \cdot \|\Omega\|} \\\nonumber
&< \frac{\frac{1} {\theta (\|\Omega+M\|+\|\Omega+N\|+\|B\|)+\|\Omega+N\|+\|B\|+\|\Omega\|}}
 {1-\frac{\|\Omega\|}{\theta (\|\Omega+M\|+\|\Omega+N\|+\|B\|)+\|\Omega+N\|+\|B\|+\|\Omega\|}} \\\nonumber
&=\frac{1}{\theta (\|\Omega+M\|+\|\Omega+N\|+\|B\|)+\|\Omega+N\|+\|B\|}.
\end{align}
Therefore, the INMS iterative method converges linearly from any starting point to a solution $x^*$ of the GAVE~\eqref{eq:gave} provided that the condition~\eqref{eq:ncont} is satisfied.
\end{proof}

According to Remarks~\ref{rem:spec}--\ref{rem:spec1}, the following corollaries can be derived.

\begin{cor}\label{cor:cor1}
Let $A,B\in \mathbb{R}^{n\times n}$, $b\in \mathbb{R}^{n}$, $\theta \in [0,1) $. Assume that $\Omega \in \mathbb{R}^{n\times n}$ is a given matrix such that $\Omega+A$ is invertible. If	
	\begin{equation*}
	\|(\Omega+ A)^{-1}\| < \frac{1}{\|B\|+\|\Omega\|+ \theta(\|\Omega+A\|+\|B\|+\|\Omega\|)},
	\end{equation*}
then the inexact MN iterative method converges linearly from any starting point to a solution $x^*$ of the GAVE~\eqref{eq:gave}.
\end{cor}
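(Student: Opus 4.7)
The plan is to derive Corollary~\ref{cor:cor1} as a direct specialization of Theorem~\ref{thm:newimn} to the matrix splitting that defines the MN method. Recall from Remark~\ref{rem:spec}(b) that the MN iteration corresponds to the choice $M=A$ and $N=0$ in the NMS framework; consequently, the \emph{inexact} MN method is precisely Algorithm~\ref{alg:inms} with this splitting. Thus the entire machinery developed in Lemma~\ref{imncon} and Theorem~\ref{thm:newimn} applies verbatim, provided we substitute these values into the convergence hypothesis~\eqref{eq:15}.

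First, I would verify that the standing invertibility assumption of Theorem~\ref{thm:newimn}, namely that $\Omega+M$ is invertible, reduces in this setting to the assumption $\Omega+A$ being invertible, which is exactly the hypothesis of the corollary. Next, I would substitute $M=A$ and $N=0$ into the right-hand side of~\eqref{eq:15}. Since $\Omega+N=\Omega$ in this case, $\|\Omega+N\|=\|\Omega\|$, and the denominator in~\eqref{eq:15} becomes
\[
\theta(\|\Omega+A\|+\|\Omega\|+\|B\|)+\|\Omega\|+\|B\|,
\]
which is precisely the denominator appearing in the condition stated in Corollary~\ref{cor:cor1} (only the terms are reordered). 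Hence the assumed bound on $\|(\Omega+A)^{-1}\|$ is exactly~\eqref{eq:15} specialized to $M=A$, $N=0$.

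Invoking Theorem~\ref{thm:newimn} then yields that the sequence generated by Algorithm~\ref{alg:inms} with $M=A$ and $N=0$, that is, the inexact MN iterative sequence, is well defined and converges linearly from any starting point to a solution $x^*$ of the GAVE~\eqref{eq:gave}. This completes the proof. Since the argument is a straightforward specialization, I do not anticipate any real obstacle; the only point that requires care is the matching of the two expressions for the convergence denominator, which is a matter of bookkeeping rather than substantive analysis.
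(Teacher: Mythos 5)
Your proposal is correct and is exactly the derivation the paper intends: the corollary is obtained by substituting $M=A$ and $N=0$ (the MN splitting from Remark~\ref{rem:spec}(b)) into condition~\eqref{eq:15} of Theorem~\ref{thm:newimn}, noting $\|\Omega+N\|=\|\Omega\|$, so the two denominators coincide up to reordering of terms.
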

Particularly, the Theorem~3.1 proposed in \cite{wangcao2019} can be derived from Corollary~\ref{cor:cor1} whenever $\theta = 0$.

\begin{cor}\label{cor:cor2}
Assume that $A\in \mathbb{R}^{n\times n}$ is invertible, $B\in \mathbb{R}^{n\times n}$, $b\in \mathbb{R}^{n}$, $\theta \in [0,1)$. $\Omega \in \mathbb{R}^{n\times n}$ is a given matrix such that $A + \Omega$ is nonsingular. If
	\begin{equation*}
	\|A^{-1}\| < \frac{1}{\|B\|+2\|\Omega\|+ \theta(\|\Omega+A\|+\|B\|+\|\Omega\|)},
	\end{equation*}
	then the inexact MN iterative method converges linearly from any starting point to a solution $x^*$ of the GAVE~\eqref{eq:gave}.
\end{cor}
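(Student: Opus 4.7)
The plan is to derive Corollary~\ref{cor:cor2} as a direct specialization of Theorem~\ref{thm:newimna}, using the matrix splitting associated with the inexact MN method. Recall from Remark~\ref{rem:spec}(b) that the MN iteration corresponds to the choice $M=A$ and $N=0$ in the general NMS framework, and hence its inexact counterpart falls under Algorithm~\ref{alg:inms} with this same splitting.

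First I would verify the structural hypotheses of Theorem~\ref{thm:newimna} under the substitution $M=A$, $N=0$. The assumption that $A$ is nonsingular immediately gives the invertibility of $M$, and the assumption that $A+\Omega$ is nonsingular is exactly the invertibility of $\Omega+M$ required by Theorem~\ref{thm:newimna}. Thus both well-posedness conditions transfer without modification.

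Next, I would translate the convergence condition \eqref{eq:ncont} under the same substitution. With $N=0$ we have $\|\Omega+N\|=\|\Omega\|$ and $\|\Omega+M\|=\|\Omega+A\|$, so the denominator on the right-hand side of \eqref{eq:ncont} becomes
\begin{equation*}
\theta(\|\Omega+A\|+\|\Omega\|+\|B\|)+\|\Omega\|+\|B\|+\|\Omega\|
=\|B\|+2\|\Omega\|+\theta(\|\Omega+A\|+\|B\|+\|\Omega\|),
\end{equation*}
and \eqref{eq:ncont} itself reads $\|A^{-1}\|=\|M^{-1}\|<1/[\|B\|+2\|\Omega\|+\theta(\|\Omega+A\|+\|B\|+\|\Omega\|)]$, which is exactly the hypothesis of Corollary~\ref{cor:cor2}.

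Finally I would invoke Theorem~\ref{thm:newimna} to conclude that the inexact iteration \eqref{eq:inms-iter}, specialized to $M=A$ and $N=0$ (i.e.\ the inexact MN method), converges linearly from any starting point to a solution $x^{*}$ of the GAVE~\eqref{eq:gave}. Since each step is essentially a substitution followed by an application of a previously established theorem, there is no real obstacle; the only point that requires a modest amount of care is book-keeping the matrix-norm quantities to confirm that the denominator simplifies precisely to the one stated in the corollary.
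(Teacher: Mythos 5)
Your derivation is correct and matches the paper's intended route: Corollary~\ref{cor:cor2} is precisely Theorem~\ref{thm:newimna} specialized to the MN splitting $M=A$, $N=0$, and your bookkeeping of the norms ($\|\Omega+N\|=\|\Omega\|$, $\|\Omega+M\|=\|\Omega+A\|$) reproduces the stated denominator exactly. The paper gives no explicit proof but states that the corollaries follow from the theorems together with Remark~\ref{rem:spec}(b), which is exactly what you have done.
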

Specially, if $\theta = 0$, then the Theorem~3.2 proposed in \cite{wangcao2019} can be derived from Corollary~\ref{cor:cor2}.

\begin{cor}\label{cor:cor3}
Let $A,B\in \mathbb{R}^{n\times n}$, $b\in \mathbb{R}^{n}$, $\theta \in [0,1) $. Assume that  $\Omega \in \mathbb{R}^{n\times n}$ is a given matrix such that $\Omega+A$ is invertible. If
\begin{equation*}
	\|(\Omega +A)^{-1}\| < \frac{1}{2\|B\|+\|\Omega-A\|+ \theta(\|\Omega+A\|+2\|B\|+\|\Omega-A\|)},
	\end{equation*}
or if $A \in \mathbb{R}^{n\times n}$ be invertible and
	\begin{equation*}
	\|A^{-1}\| < \frac{1}{2\|B\|+\|\Omega\|+ \|\Omega-A\|+\theta(\|\Omega+A\|+2\|B\|+\|\Omega-A\|)},
	\end{equation*}
    then the inexact NMN iterative method converges linearly from any starting point to a solution $x^*$ of the GAVE~\eqref{eq:gave}.
\end{cor}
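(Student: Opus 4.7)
The plan is to specialize Theorems~\ref{thm:newimn} and~\ref{thm:newimna} to the specific splitting that produces the NMN method, as identified in Remark~\ref{rem:spec1}: $M = \tfrac{1}{2}(A-\Omega)$ and $N = -\tfrac{1}{2}(A+\Omega)$. Since the corollary is stated for the inexact NMN method (i.e., Algorithm~\ref{alg:inms} with this choice), it suffices to rewrite the abstract convergence conditions in terms of $A$, $\Omega$, $B$.

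First, I would record the quantities that appear in the general bounds for this splitting:
\begin{equation*}
\Omega + M = \tfrac{1}{2}(A+\Omega), \qquad \Omega + N = \tfrac{1}{2}(\Omega - A),
\end{equation*}
so that $\|\Omega+M\| = \tfrac{1}{2}\|A+\Omega\|$, $\|\Omega+N\| = \tfrac{1}{2}\|\Omega - A\|$, and (provided $A+\Omega$ is invertible) $\|(\Omega+M)^{-1}\| = 2\|(A+\Omega)^{-1}\|$. Substituting these identities into condition \eqref{eq:15} of Theorem~\ref{thm:newimn} and clearing the factor of $2$ yields the first sufficient condition
\begin{equation*}
\|(\Omega+A)^{-1}\| < \frac{1}{2\|B\|+\|\Omega-A\|+\theta(\|\Omega+A\|+2\|B\|+\|\Omega-A\|)},
\end{equation*}
which is exactly the first inequality in the statement. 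Theorem~\ref{thm:newimn} then guarantees linear convergence of the iterates to a solution of \eqref{eq:gave}.

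For the second sufficient condition, I would mimic the Banach-perturbation argument used in the proof of Theorem~\ref{thm:newimna}, but applied directly to $A + \Omega$ rather than to $M + \Omega$: assuming $A$ is invertible with $\|A^{-1}\|\,\|\Omega\|<1$, Lemma~2.3.3 of \cite{gova2009} gives
\begin{equation*}
\|(A+\Omega)^{-1}\| \le \frac{\|A^{-1}\|}{1-\|A^{-1}\|\,\|\Omega\|},
\end{equation*}
hence $\|(\Omega+M)^{-1}\| \le \tfrac{2\|A^{-1}\|}{1-\|A^{-1}\|\,\|\Omega\|}$. Requiring this upper bound to be smaller than the right-hand side of \eqref{eq:15} (with $M,N$ as above) and rearranging gives the stated bound on $\|A^{-1}\|$ — the extra $\|\Omega\|$ in the denominator comes exactly from the $1-\|A^{-1}\|\|\Omega\|$ correction, in the same way as the $\|\Omega\|$ term appears in \eqref{eq:ncont}.

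The argument is essentially a mechanical substitution, so there is no deep obstacle; the only point requiring a bit of care is handling the factor $\tfrac{1}{2}$ consistently when identifying $\Omega+M$ and $\Omega+N$ with halves of $A\pm\Omega$, and tracking it through the clearing of the $2\|(A+\Omega)^{-1}\|$ factor in both cases. Once the two conditions on $\|(\Omega+A)^{-1}\|$ and $\|A^{-1}\|$ have been derived, Theorem~\ref{thm:newimn} (respectively its Banach-perturbation variant) yields the linear convergence of the inexact NMN iterates from any starting point to a solution of the GAVE~\eqref{eq:gave}, completing the proof.
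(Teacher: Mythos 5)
Your proposal is correct and follows exactly the derivation the paper intends: the corollary is obtained by specializing Theorem~\ref{thm:newimn} to the NMN splitting $M=\tfrac{1}{2}(A-\Omega)$, $N=-\tfrac{1}{2}(A+\Omega)$ of Remark~\ref{rem:spec1}, and your identities $\Omega+M=\tfrac{1}{2}(A+\Omega)$, $\Omega+N=\tfrac{1}{2}(\Omega-A)$ and the subsequent algebra check out for both stated inequalities. You also correctly recognize that the second condition is not a literal instance of Theorem~\ref{thm:newimna} (which would bound $\|(A-\Omega)^{-1}\|$) but requires re-running the Banach perturbation argument on $A+\Omega$ directly, and the hypothesis $\|A^{-1}\|\,\|\Omega\|<1$ you invoke is indeed implied by the stated bound on $\|A^{-1}\|$.
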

If $\theta = 0$, then the Theorems~1 and 2 proposed in \cite{li2021} can be derived from Corollary~\ref{cor:cor3}.

\begin{cor}\label{cor:cor4}
Assume that $A \in \mathbb{R}^{n\times n}$ is invertible, $B \in \mathbb{R}^{n\times n}$, $b\in \mathbb{R}^{n}$ and $\theta \in [0,1)$. If
	\begin{equation*}
	\|A^{-1}\| < \frac{1}{\|B\|+ \theta(\|A\|+\|B\|)},
	\end{equation*}
then the inexact Picard iterative method converges linearly from any starting point to a solution $x^*$ of the GAVE~\eqref{eq:gave}.
\end{cor}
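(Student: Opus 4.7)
The plan is to recognize the inexact Picard iteration as the specialization of Algorithm~\ref{alg:inms} to the matrix splitting $M=A$, $N=0$ together with the parameter choice $\Omega=0$, as recalled in Remark~\ref{rem:spec}(a); the result then follows by a direct application of Theorem~\ref{thm:newimn}.

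First I would verify the hypotheses of Theorem~\ref{thm:newimn} in this special setting. The assumption that $A$ is invertible gives that $\Omega+M=A$ is invertible, so the iterative sequence is well defined and satisfies the contraction estimate~\eqref{eq:newx}. Next I would substitute $\Omega=0$, $M=A$, $N=0$ into the right-hand side of the sufficient convergence condition~\eqref{eq:15}, which simplifies as
$$\frac{1}{\theta(\|\Omega+M\|+\|\Omega+N\|+\|B\|)+\|\Omega+N\|+\|B\|}=\frac{1}{\|B\|+\theta(\|A\|+\|B\|)}.$$
This is exactly the hypothesis stated in the corollary, so the assumed bound on $\|A^{-1}\|$ is precisely~\eqref{eq:15} for this splitting, and Theorem~\ref{thm:newimn} yields the linear convergence of $\{x^k\}$ to a solution $x^*$ of~\eqref{eq:gave}.

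There is no real obstacle beyond bookkeeping: the only thing that needs care is matching the parameters of the abstract INMS framework to those of the Picard recursion~\eqref{eq:picard} and simplifying the condition~\eqref{eq:15} correctly. Once that is done, the conclusion is immediate, and one recovers the exact Picard convergence result of~\cite{rohn2014} in the limiting case $\theta=0$.
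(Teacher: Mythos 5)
Your proposal is correct and follows exactly the route the paper intends: Corollary~\ref{cor:cor4} is obtained by specializing Theorem~\ref{thm:newimn} to the Picard splitting $M=A$, $N=0$, $\Omega=0$ of Remark~\ref{rem:spec}(a), under which condition~\eqref{eq:15} simplifies to the stated bound $\|A^{-1}\| < 1/\bigl(\|B\|+\theta(\|A\|+\|B\|)\bigr)$. Nothing further is needed.
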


Since the GAVE~\eqref{eq:gave} reduces to the AVE~\eqref{eq:ave} by simply letting $B = I$, the INMS iterative method can be directly used to solve the AVE~\eqref{eq:ave} and the following corollary can be obtained.

\begin{cor}\label{cor:cor5}
Let $A=M - N\in \mathbb{R}^{n\times n}$, $b\in \mathbb{R}^{n}$, $\theta \in [0,1) $. Assume that $\Omega \in \mathbb{R}^{n\times n}$ is a given matrix such that $\Omega+ M$ is invertible. Then if
	\begin{equation*}
	\|(\Omega + M)^{-1}\| < \frac{1}{\theta(\|\Omega+ M\|+\|\Omega + N\| + 1)+\|\Omega + N\|+1},
	\end{equation*}
	or if $M$ is invertible and
	\begin{equation*}
	\|M^{-1}\| < \frac{1}{\theta(\|\Omega+ M\|+\|\Omega + N\| +1)+\|\Omega+N\|+\|\Omega\|+1},
	\end{equation*}
	then the INMN iterative method converges linearly from any starting point to a solution $x^*$ of the AVE~\eqref{eq:ave}.
\end{cor}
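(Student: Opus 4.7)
The plan is to obtain Corollary~\ref{cor:cor5} as a direct specialization of Theorem~\ref{thm:newimn} and Theorem~\ref{thm:newimna} to the case $B = I$. Since the AVE~\eqref{eq:ave} is precisely the GAVE~\eqref{eq:gave} with $B = I$, and the INMS iterative method applied to this special GAVE is exactly the INMN iterative method applied to the AVE, every convergence statement proved for the INMS method transfers verbatim after the substitution $B \leftarrow I$.

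First I would invoke Theorem~\ref{thm:newimn} with $B = I$. Its sufficient condition~\eqref{eq:15} reads
\[
\|(\Omega+M)^{-1}\| < \frac{1}{\theta(\|\Omega+M\|+\|\Omega+N\|+\|B\|) + \|\Omega+N\|+\|B\|}.
\]
Using $\|B\| = \|I\| = 1$ in both the numerator term $\|B\|$ inside the $\theta$ factor and the additive $\|B\|$ outside, this inequality becomes exactly the first condition displayed in Corollary~\ref{cor:cor5}. Theorem~\ref{thm:newimn} then delivers linear convergence of $\{x^k\}$ to a solution of the AVE from any starting point.

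Second, for the alternative condition, I would apply Theorem~\ref{thm:newimna}, which presumes that $A$ and $M$ are invertible and uses the bound~\eqref{eq:ncont} on $\|M^{-1}\|$. Substituting $\|B\| = 1$ into~\eqref{eq:ncont} yields
\[
\|M^{-1}\| < \frac{1}{\theta(\|\Omega+M\|+\|\Omega+N\|+1) + \|\Omega+N\|+1+\|\Omega\|},
\]
which is the second condition in Corollary~\ref{cor:cor5}. Theorem~\ref{thm:newimna} then gives the desired linear convergence.

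There is no genuine obstacle here: the corollary is essentially a cosmetic rewriting of the two convergence theorems in the special case $B = I$, and the only verification needed is that $\|I\| = 1$ under the spectral norm, together with noting that no additional assumption on $A$ is imposed in the first branch, while the invertibility hypothesis of Theorem~\ref{thm:newimna} is explicitly retained in the second branch of the corollary's statement.
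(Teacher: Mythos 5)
Your proposal is correct and coincides with the paper's own (implicit) justification: the corollary is stated immediately after the remark that the AVE is the GAVE with $B=I$, and is obtained exactly as you describe, by substituting $\|B\|=\|I\|=1$ into condition~\eqref{eq:15} of Theorem~\ref{thm:newimn} for the first branch and into condition~\eqref{eq:ncont} of Theorem~\ref{thm:newimna} for the second. Your side remark about the invertibility hypotheses is also apt, since Theorem~\ref{thm:newimna} formally assumes $A$ nonsingular while the corollary's second branch only retains the invertibility of $M$, which is the only hypothesis actually used in that theorem's proof.
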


Meanwhile, some corresponding results proposed in \cite{wangcao2019,wu2021n,li2021} can be derived from Corollary~\ref{cor:cor5} and the detail is omitted here. According to Remark~\ref{rem:spec3} and Corollary~\ref{cor:cor5}, the following result can be obtained.
\begin{cor}\label{cor:cor6}
Let $\gamma\in (0,2)$, $\theta\in [0,1)$ and $A$ be nonsingular. If
\begin{equation*}
	\|A^{-1}\| < \frac{1}{\theta[(2-\frac{\gamma}{2})\|A\| +1]+2(1-\frac{\gamma}{2})\|A\|+1},
	\end{equation*}
or if
\begin{equation*}
	\|A^{-1}\| < \frac{1}{\theta[(\frac{4}{\gamma}-1)\|A\| +1]+2(\frac{2}{\gamma}-1)\|A\|+1},
	\end{equation*}
then the inexact Douglas-Rachford splitting method converges linearly from any starting point to a solution $x^*$ of the AVE~\eqref{eq:ave}.
\end{cor}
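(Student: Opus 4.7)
The plan is to derive Corollary~\ref{cor:cor6} as a direct specialization of Corollary~\ref{cor:cor5} via the identification given in Remark~\ref{rem:spec3}. Since Remark~\ref{rem:spec3} states that the Douglas-Rachford splitting iteration \eqref{eq:drs} coincides with Algorithm~\ref{alg:nms} under the choice $M=A$, $N=0$, $B=I$, and $\Omega=(\tfrac{2}{\gamma}-1)A$ with $\gamma\in(0,2)$, the associated inexact Douglas-Rachford method is, by construction, precisely the INMS iteration \eqref{eq:inms-iter} with these parameters. Convergence of the inexact DR iteration therefore reduces to checking, after this substitution, the two sufficient conditions supplied in Corollary~\ref{cor:cor5}.

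First I would record the relevant spectral norms. Because $\gamma\in(0,2)$ forces $\tfrac{2}{\gamma}-1>0$, all the ambient matrices are nonnegative scalar multiples of $A$, so
$\Omega+M=\tfrac{2}{\gamma}A$, $\Omega+N=\Omega=(\tfrac{2}{\gamma}-1)A$,
and consequently $\|\Omega+M\|=\tfrac{2}{\gamma}\|A\|$, $\|\Omega+N\|=\|\Omega\|=(\tfrac{2}{\gamma}-1)\|A\|$, $\|M^{-1}\|=\|A^{-1}\|$, and $\|(\Omega+M)^{-1}\|=\tfrac{\gamma}{2}\|A^{-1}\|$. Nonsingularity of $\Omega+M=\tfrac{2}{\gamma}A$ is inherited from nonsingularity of $A$.

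Next I would plug these quantities into the two hypotheses of Corollary~\ref{cor:cor5}. For the second clause (the one phrased in terms of $\|M^{-1}\|$), the substitution is immediate: the right-hand side collapses to
$\theta\bigl[(\tfrac{4}{\gamma}-1)\|A\|+1\bigr]+2(\tfrac{2}{\gamma}-1)\|A\|+1$, giving the second inequality of Corollary~\ref{cor:cor6}. For the first clause (the one phrased in terms of $\|(\Omega+M)^{-1}\|$), the substitution yields an inequality of the form $\tfrac{\gamma}{2}\|A^{-1}\|<1/D(\gamma,\theta,\|A\|)$; I would then multiply through by $\tfrac{2}{\gamma}$ and absorb the factor into the denominator, rewriting $\tfrac{\gamma}{2}\cdot\tfrac{2}{\gamma}=1$, $\tfrac{\gamma}{2}(\tfrac{4}{\gamma}-1)=2-\tfrac{\gamma}{2}$, and $\tfrac{\gamma}{2}(\tfrac{2}{\gamma}-1)=1-\tfrac{\gamma}{2}$, which produces the first inequality of Corollary~\ref{cor:cor6}.

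There is no real obstacle beyond bookkeeping; the only mildly delicate point is observing that the sign hypothesis $\gamma\in(0,2)$ is what lets me write $\|(\tfrac{2}{\gamma}-1)A\|=(\tfrac{2}{\gamma}-1)\|A\|$ without introducing absolute values, so the coefficients of $\|A\|$ stay positive and the two conditions remain nontrivial. Once both inequalities are obtained, linear convergence from any starting point to a solution of the AVE~\eqref{eq:ave} follows immediately from Corollary~\ref{cor:cor5}.
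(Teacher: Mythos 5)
Your overall route is exactly the one the paper intends (the paper offers no explicit proof, only the remark that the result follows from Remark~\ref{rem:spec3} and Corollary~\ref{cor:cor5}), and your treatment of the second clause is correct: with $M=A$, $N=0$, $B=I$, $\Omega=(\tfrac{2}{\gamma}-1)A$ one gets $\|\Omega+M\|=\tfrac{2}{\gamma}\|A\|$ and $\|\Omega+N\|=\|\Omega\|=(\tfrac{2}{\gamma}-1)\|A\|$, and the $\|M^{-1}\|$-condition of Corollary~\ref{cor:cor5} specializes verbatim to the second inequality of Corollary~\ref{cor:cor6}.

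The gap is in the first clause. Carrying out the multiplication by $\tfrac{\gamma}{2}$ exactly as you describe, the $\|(\Omega+M)^{-1}\|$-condition of Corollary~\ref{cor:cor5} becomes
$\|A^{-1}\| < 1\big/\bigl(\theta\bigl[(2-\tfrac{\gamma}{2})\|A\|+\tfrac{\gamma}{2}\bigr]+(1-\tfrac{\gamma}{2})\|A\|+\tfrac{\gamma}{2}\bigr)$:
the factor $\tfrac{\gamma}{2}$ must also multiply the two constants $1$ (giving $\tfrac{\gamma}{2}$, not $1$), and it turns $(\tfrac{2}{\gamma}-1)\|A\|$ into $(1-\tfrac{\gamma}{2})\|A\|$, not $2(1-\tfrac{\gamma}{2})\|A\|$. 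This is \emph{not} the first inequality printed in Corollary~\ref{cor:cor6}, so your claim that the bookkeeping ``produces the first inequality'' is false as an identity. The statement itself survives, because the printed denominator exceeds the correct one by $(1-\tfrac{\gamma}{2})(\theta+\|A\|+1)>0$ for $\gamma\in(0,2)$; hence the printed condition is strictly stronger than the specialized condition and still implies linear convergence. But to close the argument you must either add this one-line comparison (printed condition $\Rightarrow$ specialized condition $\Rightarrow$ convergence) or point out that the first inequality of Corollary~\ref{cor:cor6} appears to be a miscomputation of the intended specialization; as written, your proof asserts an algebraic equality that does not hold.
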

It should mention that the condition proposed in Corollary~\ref{cor:cor6} is different from that of \cite{chenyu2021}.

Finally, we will discuss the convergence conditions of the INMS iterative method~\eqref{eq:inms-iter} for solving the GAVE~\eqref{eq:gave} and the AVE~\eqref{eq:ave} when the matrix $\Omega=\omega I(\omega > 0)$ is a positive scalar matrix.

\begin{thm}\label{thm:scal}
 Let $A \in \mathbb{R}^{n\times n}$ be a positive definite matrix and $A = H+S$ be its a splitting with $H=\frac{1}{2}(A+A^\top)$ and $S=\frac{1}{2}(A-A^\top)$. Let $\lambda_{\min}$ and $\lambda_{\max}$ be the minimum eigenvalue and the maximum eigenvalue of the matrix $H$, respectively. Assume that $\mu_{\max}$ is the maximum value of the absolute values of the eigenvalues of the matrix $S$, $\|B\|=\tau$ and $\Omega=\omega I (\omega> 0)\in \mathbb{R}^{n\times n}$. If
\begin{align*}
\omega+\lambda_{\min}-\tau >\sqrt{\omega^2+\mu_{\max}^2} +\theta(\omega+\lambda_{\max}+\tau+\sqrt{\omega^2+\mu_{\max}^2}),
\end{align*}
then the INMS iterative method with $M = H$ and $N=-S$ converges linearly from any starting point to a solution $x^*$ of the GAVE \eqref{eq:gave}.
\end{thm}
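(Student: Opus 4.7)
The plan is to reduce the statement to the already-proved sufficient condition~\eqref{eq:15} in Theorem~\ref{thm:newimn} by computing, under the given special choices, each of the four norms that appear in it. With $M=H$, $N=-S$, $\Omega=\omega I$, the splitting $A=M-N$ becomes $A=H+S$ as required, and $\Omega+M=\omega I+H$ is nonsingular (indeed symmetric positive definite) since $H$ is SPD and $\omega>0$; thus Algorithm~\ref{alg:inms} is well defined and Theorem~\ref{thm:newimn} applies once we check its convergence inequality.

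First I would compute $\|\Omega+M\|$ and $\|(\Omega+M)^{-1}\|$. Because $\omega I+H$ is symmetric with eigenvalues $\omega+\lambda_i(H)\in[\omega+\lambda_{\min},\omega+\lambda_{\max}]$, the spectral norm equals the largest eigenvalue, giving $\|\Omega+M\|=\omega+\lambda_{\max}$ and $\|(\Omega+M)^{-1}\|=1/(\omega+\lambda_{\min})$. The term $\|B\|=\tau$ is given. The only nontrivial computation is $\|\Omega+N\|=\|\omega I-S\|$. Here I would use skew-symmetry: $S^\top=-S$, so
\begin{equation*}
(\omega I-S)^\top(\omega I-S)=(\omega I+S)(\omega I-S)=\omega^2 I-S^2.
\end{equation*}
Since $S$ is real skew-symmetric, its eigenvalues are purely imaginary of the form $\pm i\mu_j$, so $-S^2$ is symmetric positive semidefinite with eigenvalues $\mu_j^2$. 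Therefore $\omega^2 I-S^2$ has eigenvalues $\omega^2+\mu_j^2$, and
\begin{equation*}
\|\Omega+N\|=\sqrt{\rho\bigl((\omega I-S)^\top(\omega I-S)\bigr)}=\sqrt{\omega^2+\mu_{\max}^2}.
\end{equation*}

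With these four quantities in hand, I would substitute them into condition~\eqref{eq:15}. The condition $\|(\Omega+M)^{-1}\|\,[\theta(\|\Omega+M\|+\|\Omega+N\|+\|B\|)+\|\Omega+N\|+\|B\|]<1$ becomes, after multiplying both sides by $\omega+\lambda_{\min}>0$,
\begin{equation*}
\theta\bigl(\omega+\lambda_{\max}+\sqrt{\omega^2+\mu_{\max}^2}+\tau\bigr)+\sqrt{\omega^2+\mu_{\max}^2}+\tau<\omega+\lambda_{\min},
\end{equation*}
which after transposing $\tau$ and $\sqrt{\omega^2+\mu_{\max}^2}$ is exactly the hypothesis of the theorem. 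Hence Theorem~\ref{thm:newimn} yields linear convergence to a solution $x^*$ of the GAVE~\eqref{eq:gave}.

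The only step that requires any care is the evaluation of $\|\omega I-S\|$ for the real skew-symmetric $S$; the rest of the proof is a direct bookkeeping substitution. I expect no obstacles beyond keeping track of the algebraic rearrangement, since everything else follows verbatim from Theorem~\ref{thm:newimn}.
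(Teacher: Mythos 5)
Your proposal is correct and follows essentially the same route as the paper: substitute the special choices $M=H$, $N=-S$, $\Omega=\omega I$ into condition~\eqref{eq:15}, evaluate the four norms (with $\|\omega I+H\|=\omega+\lambda_{\max}$, $\|(\omega I+H)^{-1}\|=1/(\omega+\lambda_{\min})$, $\|\omega I-S\|=\sqrt{\omega^2+\mu_{\max}^2}$), and rearrange. Your explicit verification of $\|\omega I-S\|$ via $(\omega I-S)^\top(\omega I-S)=\omega^2 I-S^2$ is a detail the paper states without justification, so your write-up is if anything slightly more complete.
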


\begin{proof}
Since $M=H$, $N=-S$ and $\Omega=\omega I $, one can derive that
\begin{align*}
&\|(\Omega+M)^{-1}\| [\theta (\|\Omega+M\|+\|\Omega+N\|+\|B\|)+\|\Omega+N\|+\|B\|]\\
=&\|(\omega I+H)^{-1}\| [\theta (\|\omega I+H\|+\|\omega I-S\|+\|B\|)+\|\omega I-S\|+\|B\|]\\
=& \frac{\theta(\omega+\lambda_{\max}+\sqrt{\omega^2+\mu_{\max}^2}+\tau)+\sqrt{\omega^2+\mu_{\max}^2}+\tau}{\omega+\lambda_{\min}},
\end{align*}
from which and \eqref{eq:15} we obtain that if
$$\omega+\lambda_{\min}-\tau >\sqrt{\omega^2+\mu_{\max}^2} +\theta(\omega+\lambda_{\max}+\tau+\sqrt{\omega^2+\mu_{\max}^2}),$$
then the INMS iterative method converges linearly from any starting point to a solution $x^*$ of the GAVE~\eqref{eq:gave}.
\end{proof}
In particular, if $B=I$, the following corollary can be obtained.
\begin{cor}\label{cor4}
Let $A \in \mathbb{R}^{n\times n}$ be a positive definite matrix and $A = H+S$ be its a splitting with $H=\frac{1}{2}(A+A^T)$ and $S=\frac{1}{2}(A-A^T)$. Let $\lambda_{\min}$ and $\lambda_{\max}$ be the minimum eigenvalue and the maximum eigenvalue of the matrix $H$, respectively. Assume that $\mu_{\max}$ is the maximum value of the absolute values of the eigenvalues of the matrix $S$ and $\Omega=\omega I (\omega> 0)\in \mathbb{R}^{n\times n}$. If
\begin{align*}
\omega+\lambda_{\min}-1>\sqrt{\omega^2+\mu_{\max}^2} +\theta(\omega+\lambda_{\max}+1+\sqrt{\omega^2+\mu_{\max}^2}),
\end{align*}
then the INMS iterative method converges linearly from any starting point to the unique solution $x^*$ of the AVE~\eqref{eq:ave}.
\end{cor}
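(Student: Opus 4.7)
The plan is to recognize Corollary \ref{cor4} as the direct specialization of Theorem \ref{thm:scal} to the case $B = I$, with one extra bookkeeping step to upgrade ``a solution'' to ``the unique solution''. Setting $\tau = \|B\| = \|I\| = 1$ in Theorem \ref{thm:scal} transforms its hypothesis verbatim into the inequality assumed in the corollary, so linear convergence of the INMS iterates (with $M = H$, $N = -S$, $\Omega = \omega I$) to some solution $x^*$ of the AVE \eqref{eq:ave} follows immediately from that theorem.

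For the uniqueness claim, I would exploit the contraction-type estimate already proved in Lemma \ref{imncon}. Suppose $x^*$ and $y^*$ are two solutions of the AVE, so $F(x^*) = F(y^*) = 0$. Pick any $N_\theta \in \mathcal{N}_\theta$. By the characterization noted right after Definition \ref{eq:def}, $F(y^*) = 0$ forces $N_\theta(y^*) = y^*$. Applying Lemma \ref{imncon} with $x = y^*$ and $B = I$ gives
\begin{equation*}
\|y^* - x^*\| = \|N_\theta(y^*) - x^*\| \le C\,\|y^* - x^*\|,
\end{equation*}
where, using $M = H$, $N = -S$, $\Omega = \omega I$ and the norm evaluations $\|\omega I + H\| = \omega + \lambda_{\max}$, $\|(\omega I + H)^{-1}\| = 1/(\omega + \lambda_{\min})$, $\|\omega I - S\| = \sqrt{\omega^2 + \mu_{\max}^2}$, one obtains
\begin{equation*}
C = \frac{\sqrt{\omega^2 + \mu_{\max}^2} + 1 + \theta\bigl(\omega + \lambda_{\max} + 1 + \sqrt{\omega^2 + \mu_{\max}^2}\bigr)}{\omega + \lambda_{\min}}.
\end{equation*}
The standing hypothesis of the corollary is precisely $C < 1$, so the inequality $\|y^* - x^*\| \le C\|y^* - x^*\|$ forces $x^* = y^*$.

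There is no real obstacle here; the computations have already been carried out inside the proof of Theorem \ref{thm:scal}, and uniqueness is a free by-product of the contraction constant being strictly less than $1$. The only items I would double-check in the write-up are (i) the spectral identity $\|\omega I - S\| = \sqrt{\omega^2 + \mu_{\max}^2}$ for skew-symmetric $S$, which follows because $(\omega I - S)^\top(\omega I - S) = \omega^2 I - S^2$ and $-S^2$ is symmetric positive semidefinite with largest eigenvalue $\mu_{\max}^2$, and (ii) that the ``iff'' characterization $N_\theta(x) = x \Leftrightarrow F(x) = 0$ is actually used as stated, in the direction ``$F(y^*) = 0 \Rightarrow N_\theta(y^*) = y^*$''. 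Both are routine, so the corollary follows in essentially one line after Theorem \ref{thm:scal} is invoked.
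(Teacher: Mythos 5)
Your proposal is correct and follows the paper's own route: the paper obtains Corollary~\ref{cor4} simply by setting $B=I$ (hence $\tau=\|B\|=1$) in Theorem~\ref{thm:scal}, exactly as you do. Your additional uniqueness argument --- using $F(y^*)=0\Rightarrow N_\theta(y^*)=y^*$ together with the contraction constant $C<1$ from Lemma~\ref{imncon} --- is sound and in fact supplies a step the paper leaves implicit when it upgrades ``a solution'' to ``the unique solution'' of the AVE~\eqref{eq:ave}.
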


\section{Numerical results}\label{sec:numer}
In this section, some numerical results will be presented to illustrate the efficiency of Algorithm~\ref{alg:inms} for solving the large-scaled GAVE~\eqref{eq:gave}. In \cite{wangcao2019}, it has shown the advantages of the MN iterative method over the GN iterative method \cite{mang2009}, the modified generalized Newton iterative method \cite{li2016} and the Picard iterative method \cite{rohn2014}. In \cite{li2021}, the NMN iterative method is compared with the MN iterative method. In \cite{wu2021n}, numerical experiments have verified that some relaxation versions of the NMS iterative method are superior to the Picard iterative method and the MN iterative method under certain conditions. In addition, as stated in the previous sections, the NMS method contains the MN and the NMN iterative methods as its special cases. Hence, in this section, we focus our attention on comparing the performance of the NMS iterative method and the INMS iterative method. Concretely, the following six algorithms will be tested.

\begin{enumerate}
\item NJ: The NJ method, namely, Algorithm~\ref{alg:nms} with $M = D$ and $N = L+U$.

\item INJ: The inexact version of the NJ method, that is, Algorithm~\ref{alg:inms} with $M = D$ and $N = L+U$.

\item NGS: The NGS method, namely, Algorithm~\ref{alg:nms} with $M = D-L$ and $N = U$.

\item INGS: The inexact NGS method, that is, Algorithm~\ref{alg:inms} with $M = D-L$ and $N = U$.

\item NSOR: The NSOR method, namely, Algorithm~\ref{alg:nms} with $M = \frac{1}{\alpha}D-L$ and $N =(\frac{1}{\alpha}-1)D+U$.

\item INSOR: The inexact NSOR method, that is, Algorithm~\ref{alg:inms} with $M = \frac{1}{\alpha}D-L$ and $N =(\frac{1}{\alpha}-1)D+U$.

\end{enumerate}

At each iterative step of NJ, NGS and NSOR methods, the main task is to exactly solve a system of linear equations with the coefficient matrix $D + \Omega$, $D + \Omega - L$ and $D + \alpha\Omega - \alpha L$, respectively. For the sake of efficiency, we can pre-compute the LU decomposition of the above-mentioned coefficient matrices using the \textbf{decomposition} function of MATLAB. For the inexact methods, theoretically, it follows from \eqref{eq:15} that
\begin{equation*}
	0\leq \theta < \frac{1-\|(\Omega+M)^{-1}\|(\|\Omega+N\|+\|B\|)}{\|(\Omega+M)^{-1}\| (\|\Omega+M\|+\|\Omega+N\|+\|B\|)}<1.
	\end{equation*}
However, $ \frac{1-\|(\Omega+M)^{-1}\|(\|\Omega+N\|+\|B\|)}{\|(\Omega+M)^{-1}\| (\|\Omega+M\|+\|\Omega+N\|+\|B\|)}$ is generally expensive to compute or hard to estimate. In practice, based on this theoretical guidance, $\theta = \min\left\{0.5, \frac{1}{\max\{1,k-l_{\max}\}}\right\}$ with $l_{\max} = 10$ is used. Here, $k$ counts the number of outer iterative step. In addition,
the LSQR \cite{pasa1982} method is used as the inner iterative method to approximately solve the involved linear systems.

In the numerical results, we report the number of iteration steps (denoted by ``IT"), the elapsed CPU time in seconds (denoted as ``CPU") and the relative residual (denoted by ``RES").
For the sake of fairness, the reported CPU time is the mean value of ten tests for each method. RES is defined as
\begin{equation*}
\text{RES}(x^k):= \frac{\|Ax^k - B|x^k| - b \|}{\|b\|}.
\end{equation*}
All tests are started from the initial vector $x^0 = (1,0,1,0,\cdots,1,0,\cdots)^\top$ and terminated if
$
\text{RES}(x^k)\le 10^{-6}
$
or the prescribed maximal iteration number $k_{\max} = 500$ is exceeded. All experiments are implemented in MATLAB R2018b with a machine precision $2.22\times 10^{-16}$ on a PC Windows 10 operating system with an Intel i7-9700 CPU and 8GB RAM.

\begin{exam}[\!\!\cite{wangcao2019}]\label{Example4.1}
 Consider the LCP(M,q), where $M=\hat{M}+\mu I$ with
 $$\hat{M}=\textbf{tridiag}(-I,S,-I)\in \mathbb{R}^{n\times n} \quad and \quad S=\textbf{tridiag}(-1,4,-1)\in \mathbb{R}^{m\times m},$$
and $q = -Mz^*$ with $z^* = (1.2, 1.2, \cdots, 1.2)^\top \in \mathbb{R}^n$ being the unique solution of the LCP(M,q). In this case, the unique solution of the corresponding  GAVE~\eqref{eq:gave} (with $A = M + I$, $B = M-I$ and $x = \frac{1}{2}\left[(M-I)z +q\right]$) is $x^* = (-0.6, -0.6, \cdots, -0.6)^\top \in \mathbb{R}^{n}$.

As stated in \cite{wu2021n}, $\Omega = \hat{M}$ and $\Omega=1.5\hat{M}$ are used. In addition, two values of the parameter $\mu$ are used, i.e., $\mu = 4$ and $\mu = -1$. For the NSOR method, the experimentally optimal parameter $\alpha_{exp}$ is used, which makes the NSOR method require the smallest iterative step. The same parameter is used for the INSOR method. Numerical results for this example are reported in Tables~\ref{table1}-\ref{table4}, from which we can find that the inexact methods are superior to the corresponding exact methods in terms of CPU time.
\end{exam}

\setlength{\tabcolsep}{1.5pt}
\begin{table}[!h]\small
	\centering
	\caption{Numerical results for Example~\ref{Example4.1} with $\mu=4$ and $\Omega=\hat{M}$.} \label{table1}
	\begin{tabular}{c c c c c c c c }\hline
		\multirow{1}{*}{Method}  & $n$ &  $10000$ & $12100$ & $14400$ & $16900$ & $19600$ & $22500$ \\\hline
		\multirow{3}{*}{NJ} & IT  & $12$ & $12$ & $12$ & $12$& $12$& $12$  \\
		& CPU & $0.0387$ & $0.0533$ & $0.0627$ & $0.0851$& $0.1140$& $0.1382$\\
		& RES & $6.7322\times 10^{-7}$ & $6.4359\times 10^{-7}$  & $6.1760\times 10^{-7}$ &  $5.9457\times 10^{-7}$& $5.7399\times 10^{-7}$ & $5.5545\times 10^{-7}$\\\hline
\multirow{3}{*}{INJ} & IT  & $23$ & $23$ & $23$ & $23$& $23$ & $23$\\
		& CPU & $\textbf{0.0136}$ & $\textbf{0.0167}$ & $\textbf{0.0175}$ & $\textbf{0.0202}$& $\textbf{0.0238}$  & $\textbf{0.0296}$\\
		& RES &$6.1803\times 10^{-7}$ &$6.0229\times 10^{-7}$ & $5.8752\times 10^{-7}$ & $5.7367\times 10^{-7} $& $5.6067\times 10^{-7}$ & $5.4846\times 10^{-7}$\\\hline
		\multirow{3}{*}{NGS} & IT  & $11$ & $11$ & $11$ & $11$& $11$& $11$\\
		& CPU & $0.0382$ & $0.0471$ & $0.0543$ & $0.0757$& $0.0970$ & $0.1079$ \\
		& RES &$3.3279\times 10^{-7}$ & $3.2923\times 10^{-7}$ & $3.2620\times 10^{-7}$ & $3.2361\times 10^{-7}$& $3.2135\times 10^{-7}$ & $3.1937\times 10^{-7}$\\\hline
\multirow{3}{*}{INGS} & IT  & $16$ & $16$ & $16$ & $16$& $16$ & $16$\\
		& CPU & $\textbf{0.0112}$ & $\textbf{0.0142}$ & $\textbf{0.0126}$ & $\textbf{0.0152}$& $\textbf{0.0186}$ & $\textbf{0.0202}$\\
		& RES &$4.7650 \times 10^{-7}$ &$4.5093\times 10^{-7}$ & $4.3174\times 10^{-7}$ & $4.1712\times 10^{-7}$& $4.0547\times 10^{-7}$ & $3.9561\times 10^{-7}$  \\ \hline
		\multirow{4}{*}{NSOR} &$\alpha_{exp}$ & $0.9$ & $0.9$ & $0.9$ & $0.9$ & $0.9$ & $0.9$\\
& IT  & $9$ & $9$ & $9$ & $9$& $9$ & $9$\\

		& CPU & $0.0361$ & $0.0453$ & $0.0529$ & $0.0706$& $0.0876$ & $0.1011$\\
		& RES &$1.8257\times 10^{-7}$ &$1.8105\times 10^{-7}$ & $1.7976\times 10^{-7}$ & $1.7865\times 10^{-7}$& $1.7769\times 10^{-7}$  & $1.7685\times 10^{-7}$\\\hline
		\multirow{3}{*}{INSOR} & IT  & $16$ & $16$ & $16$ & $16$& $16$ & $16$\\
		& CPU & $\textbf{0.0112}$ & $\textbf{0.0127}$ & $\textbf{0.0133}$ & $\textbf{0.0176}$& $\textbf{0.0210}$ & $\textbf{0.0217}$\\
		& RES &$4.7363\times 10^{-7}$ &$4.5537\times 10^{-7}$ & $4.3715\times 10^{-7}$ & $4.1948\times 10^{-7}$& $4.0268\times 10^{-7}$ & $3.8689\times 10^{-7}$ \\ \hline
	\end{tabular}
\end{table}

\setlength{\tabcolsep}{1.5pt}
\begin{table}[!h]\small
	\centering
	\caption{Numerical results for Example~\ref{Example4.1} with $\mu=4$ and $\Omega=1.5\hat{M}$.} \label{table2}
	\begin{tabular}{c c c c c c c c }\hline
		\multirow{1}{*}{Method}  & $n$ &  $10000$ & $12100$ & $14400$ & $16900$ & $19600$ & $22500$ \\\hline
		\multirow{3}{*}{NJ} & IT  & $8$ & $8$ & $8$ & $8$& $8$& $8$  \\
		& CPU & $0.0354$ & $0.0440$ & $0.0566$ & $0.0764$& $0.0939$& $0.1053$\\
		& RES & $4.3499\times 10^{-7}$ & $4.3739\times 10^{-7}$  & $4.3940\times 10^{-7}$ &  $4.4110\times 10^{-7}$& $4.4256\times 10^{-7}$ & $4.4383\times 10^{-7}$\\\hline
\multirow{3}{*}{INJ} & IT  & $14$ & $15$ & $15$ & $15$& $15$ & $15$\\
		& CPU & $\textbf{0.0108}$ & $\textbf{0.0126}$ & $\textbf{0.0149}$ & $\textbf{0.0175}$& $\textbf{0.0194}$  & $\textbf{0.0215}$\\
		& RES &$6.6261\times 10^{-7}$ &$1.5451\times 10^{-7}$ & $1.5122\times 10^{-7}$ & $1.5493\times 10^{-7} $& $5.7194\times 10^{-7}$ & $5.2089\times 10^{-7}$\\\hline
		\multirow{3}{*}{NGS} & IT  & $8$ & $8$ & $7$ & $7$& $7$& $7$\\
		& CPU & $0.0361$ & $0.0426$ & $0.0461$ & $0.0668$& $0.0817$ & $0.0870$ \\
		& RES &$1.5011\times 10^{-7}$ & $1.4560\times 10^{-7}$ & $9.9425\times 10^{-7}$ & $9.7057\times 10^{-7}$& $9.4977\times 10^{-7}$ & $9.3134\times 10^{-7}$\\\hline
\multirow{3}{*}{INGS} & IT  & $15$ & $15$ & $15$ & $16$& $16$ & $16$\\
		& CPU & $\textbf{0.0125}$ & $\textbf{0.0133}$ & $\textbf{0.0140}$ & $\textbf{0.0178}$& $\textbf{0.0207}$ & $\textbf{0.0223}$\\
		& RES &$4.2204 \times 10^{-7}$ &$4.4074\times 10^{-7}$ & $4.4622\times 10^{-7}$ & $5.3650\times 10^{-7}$& $3.3982\times 10^{-7}$ & $1.7201\times 10^{-7}$  \\ \hline
		\multirow{4}{*}{NSOR} &$\alpha_{exp}$ & $0.9$ & $0.9$ & $0.9$ & $0.9$ & $0.9$ & $0.9$\\
& IT  & $6$ & $6$ & $6$ & $6$& $6$ & $6$\\
		& CPU & $0.0335$ & $0.0428$ & $0.0450$ & $0.0618$& $0.0787$ & $0.0850$\\
		& RES &$4.8032\times 10^{-7}$ &$4.5455\times 10^{-7}$ & $4.3254\times 10^{-7}$ & $4.1323\times 10^{-7}$& $3.9632\times 10^{-7}$  & $3.8130\times 10^{-7}$\\\hline
		\multirow{3}{*}{INSOR} & IT  & $15$ & $15$ & $15$ & $15$& $15$ & $15$\\
		& CPU & $\textbf{0.0127}$ & $\textbf{0.0139}$ & $\textbf{0.0146}$ & $\textbf{0.0171}$& $\textbf{0.0201}$ & $\textbf{0.0223}$\\
		& RES &$7.9187\times 10^{-7}$ &$7.5056\times 10^{-7}$ & $7.1291\times 10^{-7}$ & $6.7883\times 10^{-7}$& $6.4953\times 10^{-7}$ & $6.2435\times 10^{-7}$ \\ \hline
	\end{tabular}
\end{table}

\setlength{\tabcolsep}{1.5pt}
\begin{table}[!h]\small
	\centering
	\caption{Numerical results for Example~\ref{Example4.1} with $\mu=-1$ and $\Omega=\hat{M}$.} \label{table3}
	\begin{tabular}{c c c c c c c c }\hline
		\multirow{1}{*}{Method}  & $n$ &  $10000$ & $12100$ & $14400$ & $16900$ & $19600$ & $22500$ \\\hline
		\multirow{3}{*}{NJ} & IT  & $50$ & $50$ & $50$ & $50$& $50$& $49$  \\
		& CPU & $0.0935$ & $0.1198$ & $0.1611$ & $0.1867$& $0.2253$& $0.2694$\\
		& RES & $9.0284\times 10^{-7}$ & $8.7295\times 10^{-7}$  & $8.4716\times 10^{-7}$ &  $8.2466\times 10^{-7}$& $8.0483\times 10^{-7}$ & $9.9690\times 10^{-7}$\\\hline
\multirow{3}{*}{INJ} & IT  & $48$ & $48$ & $48$ & $48$& $48$ & $48$\\
		& CPU & $\textbf{0.0406}$ & $\textbf{0.0429}$ & $\textbf{0.0494}$ & $\textbf{0.0534}$& $\textbf{0.0608}$  & $\textbf{0.0666}$\\
		& RES &$8.2690\times 10^{-7}$ &$8.2886\times 10^{-7}$ & $8.3015\times 10^{-7}$ & $8.3646\times 10^{-7} $& $8.3260\times 10^{-7}$ & $8.3539\times 10^{-7}$\\\hline
		\multirow{3}{*}{NGS} & IT  & $57$ & $57$ & $57$ & $56$& $56$& $56$\\
		& CPU & $0.1098$ & $0.1290$ & $0.1613$ & $0.1993$& $0.2481$ & $0.3003$ \\
		& RES &$9.2895\times 10^{-7}$ & $8.8685\times 10^{-7}$ & $8.5001\times 10^{-7}$ & $9.6738\times 10^{-7}$& $9.3301\times 10^{-7}$ & $9.0209\times 10^{-7}$\\\hline
\multirow{3}{*}{INGS} & IT  & $61$ & $60$ & $60$ & $59$& $58$ & $58$\\
		& CPU & $\textbf{0.0632}$ & $\textbf{0.0640}$ & $\textbf{0.0715}$ & $\textbf{0.0763}$& $\textbf{0.0858}$ & $\textbf{0.0946}$\\
		& RES &$9.7209\times 10^{-7}$ &$9.1956\times 10^{-7}$ & $8.6591\times 10^{-7}$ & $9.6978\times 10^{-7}$& $9.1564\times 10^{-7}$ & $9.0777\times 10^{-7}$  \\ \hline
		\multirow{4}{*}{NSOR} &$\alpha_{exp}$ & $1.3$ & $1.29$ & $1.29$ & $1.29$ & $1.28$ & $1.24$\\
& IT  & $53$ & $52$ & $52$ & $52$& $52$ & $52$\\
		& CPU & $0.0974$ & $0.1221$ & $0.1564$ & $0.1935$& $0.2463$ & $0.3009$\\
		& RES &$8.5697\times 10^{-7}$ &$9.9468\times 10^{-7}$ & $9.6795\times 10^{-7}$ & $9.9154\times 10^{-7}$& $9.0375\times 10^{-7}$  & $9.4811\times 10^{-7}$\\\hline
		\multirow{3}{*}{INSOR} & IT  & $57$ & $53$ & $56$ & $56$& $56$ & $51$\\
		& CPU & $\textbf{0.0636}$ & $\textbf{0.0624}$ & $\textbf{0.0735}$ & $\textbf{0.0824}$& $\textbf{0.0907}$ & $\textbf{0.0874}$\\
		& RES &$9.6355\times 10^{-7}$ &$9.4753\times 10^{-7}$ & $8.4190\times 10^{-7}$ & $8.4610\times 10^{-7}$& $8.3985\times 10^{-7}$ & $8.6240\times 10^{-7}$ \\ \hline
	\end{tabular}
\end{table}

\setlength{\tabcolsep}{1.5pt}
\begin{table}[!h]\small
	\centering
	\caption{Numerical results for Example~\ref{Example4.1} with $\mu=-1$ and $\Omega=1.5\hat{M}$.} \label{table4}
	\begin{tabular}{c c c c c c c c }\hline
		\multirow{1}{*}{Method}  & $n$ &  $10000$ & $12100$ & $14400$ & $16900$ & $19600$ & $22500$ \\\hline
		\multirow{3}{*}{NJ} & IT  & $67$ & $66$ & $66$ & $66$& $66$& $65$  \\
		& CPU & $0.1204$ & $0.1607$ & $0.2001$ & $0.2622$& $0.2942$& $0.3530$\\
		& RES & $8.5918\times 10^{-7}$ & $9.6445\times 10^{-7}$  & $9.3898\times 10^{-7}$ &  $8.9779\times 10^{-7}$& $8.7010\times 10^{-7}$ & $9.9392\times 10^{-7}$\\\hline
\multirow{3}{*}{INJ} & IT  & $68$ & $68$ & $68$ & $68$& $69$ & $66$\\
		& CPU & $\textbf{0.0689}$ & $\textbf{0.0713}$ & $\textbf{0.0796}$ & $\textbf{0.0974}$& $\textbf{0.1007}$  & $\textbf{0.1056}$\\
		& RES &$8.7035\times 10^{-7}$ &$9.8651\times 10^{-7}$ & $9.4970\times 10^{-7}$ & $9.3371\times 10^{-7} $& $9.7098\times 10^{-7}$ & $9.0118\times 10^{-7}$\\\hline
		\multirow{3}{*}{NGS} & IT  & $74$ & $74$ & $73$ & $73$& $73$& $72$\\
		& CPU & $0.1454$ & $0.1751$ & $0.2207$ & $0.2994$& $0.3546$ & $0.4199$ \\
		& RES &$9.2533\times 10^{-7}$ & $8.8342\times 10^{-7}$ & $9.6383\times 10^{-7}$ & $9.2693\times 10^{-7}$& $8.9398\times 10^{-7}$ & $9.8427\times 10^{-7}$\\\hline
\multirow{3}{*}{INGS} & IT  & $79$ & $81$ & $82$ & $82$& $82$ & $82$\\
		& CPU & $\textbf{0.0950}$ & $\textbf{0.0957}$ & $\textbf{0.1068}$ & $\textbf{0.1254}$& $\textbf{0.1352}$ & $\textbf{0.1466}$\\
		& RES &$9.7577 \times 10^{-7}$ &$8.8595\times 10^{-7}$ & $8.8570\times 10^{-7}$ & $9.1197\times 10^{-7}$& $8.9751\times 10^{-7}$ & $8.9341\times 10^{-7}$  \\ \hline
		\multirow{4}{*}{NSOR} &$\alpha_{exp}$ & $1.3$ & $1.3$ & $1.3$ & $1.3$ & $1.3$ & $1.3$\\
& IT  & $69$ & $69$ & $69$ & $69$& $68$ & $68$\\
		& CPU & $0.1237$ & $0.1583$ & $0.1957$ & $0.2491$& $0.3043$ & $0.3525$\\
		& RES &$9.9378\times 10^{-7}$ &$9.4844\times 10^{-7}$ & $9.0873\times 10^{-7}$ & $8.7362\times 10^{-7}$& $9.6738\times 10^{-7}$  & $9.4340\times 10^{-7}$\\\hline
		\multirow{3}{*}{INSOR} & IT  & $61$ & $63$ & $63$ & $67$& $67$ & $66$\\
		& CPU & $\textbf{0.0747}$ & $\textbf{0.0787}$ & $\textbf{0.0887}$ & $\textbf{0.1078}$& $\textbf{0.1157}$ & $\textbf{0.1292}$\\
		& RES &$9.6623\times 10^{-7}$ &$9.6313\times 10^{-7}$ & $9.8202\times 10^{-7}$ & $8.8519\times 10^{-7}$& $8.7996\times 10^{-7}$ & $9.6289\times 10^{-7}$ \\ \hline
	\end{tabular}
\end{table}

\section{Conclusions}\label{sec:conclusion}
An inexact framework of the Newton-based matrix splitting (INMS) iterative method is developed for solving the GAVE~\eqref{eq:gave}. The INMS iterative method can be regarded as a generalization of the exact NMS iterative method proposed in \cite{wangcao2019}. Linear convergence of the INMS iterative method is studied in detail. Numerical results show that the INMS method is superior to the exact NMS method in terms of CPU time.



\clearpage

\end{document}